\newtheorem{thm}{Theorem}[section]
\newtheorem{prop}[thm]{Proposition}
\newtheorem{lem}[thm]{Lemma}
\theoremstyle{definition}
\newtheorem{defn}[thm]{Definition}
\theoremstyle{remark}
\newtheorem{rem}[thm]{Remark}
\newcommand{\spec}{\operatorname{Spec}}
\DeclareMathOperator{\Hom}{\mathscr{H}\text{\kern -3pt {\calligra\large om}}\,}
\begin{document}

\makeatletter
\let\c@equation\c@thm
\makeatother
\numberwithin{equation}{section}

\title{Abelian Log Fundamental Group scheme}

\author{Aritra Sen}

\date{}

\begin{abstract}
Let $S$ be a connected Dedekind scheme and $X$ be  a proper smooth connected scheme over $S$ . Let $D$ a divisor with no multiplicity of $X$ such that the irreducible components of $D$ and as well their intersections are smooth over $S$. Now if we endow $X$ with the log structure associated with $D$ then the structure morphism from $X$ to $S$ is log-smooth. Let $x: S \to X$ be a $S$-point such that it doesn't intersect $D$. Then we prove that the maximal abelian quotient of the log Nori fundamental group scheme of $X$ fits in to an exact sequence of the form $0 \rightarrow (\mathbf{NS}^{\tau}_{X/S,D})^{\vee} \rightarrow (\pi^\text{log}_{\text{Nori}}(X,x))^{\text{ab}} \rightarrow \underset{n}{\varprojlim} \mathbf{Alb}_{X/S,D}[n] \rightarrow 0$. Here $\mathbf{NS}^{\tau}_{X/S,D}$ is the   torsion subgroup scheme of the generalized Neron-Severi group and $\mathbf{Alb}_{X/S,D}$ is the generalized Albanese scheme associated with the divisor $D$.
\end{abstract}

\maketitle

\tableofcontents

\section{Introduction}

In \cite{antei2011abelian} the following result was proved
\begin{thm}
Let $S$ be a connected Dedekind scheme and $X$ be a proper smooth connected $S$-scheme with a $S$-point $x: S \to X$. Then the maximally abelian quotient of the Nori fundamental scheme fits in an exact sequence of group schemes $0 \rightarrow (\mathbf{NS}^{\tau}_{X/S})^{\vee} \rightarrow (\pi_{\text{Nori}}(X,x))^{\text{ab}} \rightarrow \underset{n}{\varprojlim} \mathbf{Alb}_{X/S}[n] \rightarrow 0$.
\end{thm}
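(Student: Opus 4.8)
The strategy is to reduce everything to the arithmetic of the relative Picard scheme of $X/S$, following \cite{antei2011abelian}. Write $\pi:=\pi_{\text{Nori}}(X,x)$. Recall that $\pi$ pro-represents, on the category of finite flat $S$-group schemes, the functor $G\mapsto\{x\text{-pointed }G\text{-torsors on }X\}$, and that, restricting to finite flat \emph{commutative} $G$, the maximal abelian quotient $\pi^{\text{ab}}$ pro-represents the same functor. Thus $\pi^{\text{ab}}=\varprojlim_{i}G_{i}$ over its finite flat commutative quotients, and its Cartier dual $(\pi^{\text{ab}})^{\vee}:=\varinjlim_{i}G_{i}^{\vee}$ is an ind-finite commutative $S$-group scheme with
\[
\operatorname{Hom}_{S}\bigl(H,(\pi^{\text{ab}})^{\vee}\bigr)\ \cong\ \operatorname{Hom}_{S}\bigl(\pi^{\text{ab}},H^{\vee}\bigr)\ =\ \{x\text{-pointed }H^{\vee}\text{-torsors on }X\}
\]
for every finite flat commutative $S$-group scheme $H$ (the first isomorphism is Cartier duality, using that $H$ is finite).

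Next comes the Weil--Barsotti formula. Since $X\to S$ admits the section $x$ one has $f_{*}\mathcal{O}_{X}=\mathcal{O}_{S}$, so the fibres are geometrically connected and the rigidified relative Picard functor $\mathbf{Pic}_{X/S}$ is representable. Identifying a $G$-torsor on $X$ with a $G^{\vee}$-graded algebra of line bundles (whose relative spectrum recovers the torsor), and matching rigidifications along $x$, one gets for $G$ finite flat commutative
\[
\{x\text{-pointed }G\text{-torsors on }X\}\ \cong\ \operatorname{Hom}_{S}\bigl(G^{\vee},\mathbf{Pic}_{X/S}\bigr),
\]
and since $G^{\vee}$ is finite the image of such a homomorphism is torsion, hence lies in $\mathbf{Pic}^{\tau}_{X/S}$. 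Combining with the previous step, the ind-group schemes $(\pi^{\text{ab}})^{\vee}$ and $\mathbf{Pic}^{\tau}_{X/S}$ have the same torsion, so $(\pi^{\text{ab}})^{\vee}\cong\varinjlim_{n}\mathbf{Pic}^{\tau}_{X/S}[n]$ and, dualizing,
\[
\pi^{\text{ab}}\ \cong\ \varprojlim_{n}\bigl(\mathbf{Pic}^{\tau}_{X/S}[n]\bigr)^{\vee},
\]
the limit over $n$ ordered by divisibility, with transition maps the Cartier duals of the inclusions $\mathbf{Pic}^{\tau}_{X/S}[n]\hookrightarrow\mathbf{Pic}^{\tau}_{X/S}[nm]$.

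The third step is a filtration argument. From $0\to\mathbf{Pic}^{0}_{X/S}\to\mathbf{Pic}^{\tau}_{X/S}\to\mathbf{NS}^{\tau}_{X/S}\to0$, with $\mathbf{NS}^{\tau}_{X/S}$ finite flat over $S$, one passes to $n$-torsion (using that $\mathbf{NS}^{\tau}_{X/S}$ is killed by all $n$ in a cofinal set and that $\mathbf{Pic}^{0}_{X/S}$ is, in the relevant sense, $n$-divisible) to obtain, for such $n$, a short exact sequence of finite flat $S$-group schemes
\[
0\longrightarrow\mathbf{Pic}^{0}_{X/S}[n]\longrightarrow\mathbf{Pic}^{\tau}_{X/S}[n]\longrightarrow\mathbf{NS}^{\tau}_{X/S}\longrightarrow0.
\]
Cartier duality is exact on finite flat commutative $S$-group schemes, so dualizing gives
\[
0\longrightarrow\bigl(\mathbf{NS}^{\tau}_{X/S}\bigr)^{\vee}\longrightarrow\bigl(\mathbf{Pic}^{\tau}_{X/S}[n]\bigr)^{\vee}\longrightarrow\bigl(\mathbf{Pic}^{0}_{X/S}[n]\bigr)^{\vee}\longrightarrow0,
\]
and the Weil pairing, in the form built into the definition of $\mathbf{Alb}_{X/S}$, identifies $\bigl(\mathbf{Pic}^{0}_{X/S}[n]\bigr)^{\vee}$ with $\mathbf{Alb}_{X/S}[n]$. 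Taking $\varprojlim_{n}$: the tower $\{(\mathbf{NS}^{\tau}_{X/S})^{\vee}\}_{n}$ has transition maps equal to the identity once $n$ kills $\mathbf{NS}^{\tau}_{X/S}$, hence satisfies the Mittag--Leffler condition and has vanishing $\varprojlim^{1}$; therefore
\[
0\longrightarrow\bigl(\mathbf{NS}^{\tau}_{X/S}\bigr)^{\vee}\longrightarrow\varprojlim_{n}\bigl(\mathbf{Pic}^{\tau}_{X/S}[n]\bigr)^{\vee}\longrightarrow\varprojlim_{n}\mathbf{Alb}_{X/S}[n]\longrightarrow0
\]
is exact, and its middle term is $\pi^{\text{ab}}=(\pi_{\text{Nori}}(X,x))^{\text{ab}}$ by the second step.

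I expect the main obstacle to be controlling the \emph{integral} structure of these objects over the Dedekind base $S$, as opposed to over a field: the representability of $\mathbf{Pic}^{\tau}_{X/S}$ and the flatness over $S$ of its $n$-torsion group schemes (the Neron-Severi part can jump in the fibres), the short exactness of the $n$-torsion sequence in the range of the residue characteristics, where $\mathbf{Pic}^{0}_{X/S}$ may fail to be smooth, and the compatibility of the Weil pairing with Cartier duality on $n$-torsion over $S$ — which is precisely the point that forces the statement to be phrased through $\mathbf{Alb}_{X/S}$ rather than $\mathbf{Pic}^{0}_{X/S}$. One also needs the existence and Tannakian description of the Nori fundamental group scheme over the non-field base $S$; for this one invokes the known results in the literature. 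Granting these inputs, the argument is exactly the one sketched above.
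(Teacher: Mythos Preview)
The paper does not itself prove this statement: it is quoted as a known result from \cite{antei2011abelian}, and serves as the classical prototype for the log-theoretic Theorem~\ref{mainthm}. That said, your outline is correct and is essentially the argument of \cite{antei2011abelian}; it is also, step for step, the specialization to trivial log structure of the paper's own proof of Theorem~\ref{mainthm} in Section~5 (Proposition~\ref{picisomorphism} is your Weil--Barsotti step, Proposition~\ref{fundamental_pic} is your identification $\pi^{\text{ab}}\cong\varprojlim_n(\mathbf{Pic}[n])^{\vee}$, and Lemmas~5.5--5.6 plus the final theorem are your filtration/dualization/limit step). The only cosmetic difference is that the paper packages the filtration argument through the derived tensor products $C_i\otimes_{\mathbb Z}^{\mathbf L}\mathbb Z/n\mathbb Z$ rather than writing out the $n$-torsion snake sequence directly; in the classical case this is purely a matter of language, while in the log case it is what lets the extra $\bigoplus\mathbb Z_{D_i}$ term be carried along cleanly.
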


The Neron-Severi group scheme $\mathbf{NS}_{X/S}$ of $X$ is defined  as the group scheme $\mathbf{Pic}_{X/S}/\mathbf{Pic}^{0}_{X/S}$. The torsion subgroup scheme of $\mathbf{NS}_{X/S}$ is denoted by $\mathbf{NS}^{\tau}_{X/S}$.
The Albanese scheme $\mathbf{Alb}_{X/S}$ is the scheme which  represents the functor $\underline{\mathbf{Ext}}^{1}(\mathbf{Pic}^{0}_{X/S}, \mathbb{G}_m)$.

In this paper we prove a log scheme theoretic version of the above theorem.
Let $S$ be a connected Dedekind Scheme and $X= (X,\mathcal{O}_X)$ a  proper smooth connected scheme over $S$. Let $D$ be a divisor with no multiplicity of $X$ such that the irreducible components of $D$ as well as their intersections are smooth over $S$. Now we endow $X$ with the log structure associated with $D$. Then the structure morphism $f$ from $X$ to $S$ is log-smooth. Let $x$ be a $S$-point $x: S \to X$ such that it doesn't intersect $D$. Then we have the following theorem
\begin{thm}
\label{mainthm}
The maximal abelian quotient of the log Nori fundamental group scheme which is a profinite group scheme over $S$ fits in to an exact sequence of the form $$0 \rightarrow (\mathbf{NS}^{\tau}_{X/S,D})^{\vee} \rightarrow (\pi^\text{log}_{\text{Nori}}(X,x))^{\text{ab}} \rightarrow \underset{n}{\varprojlim} \mathbf{Alb}_{X/S,D}[n] \rightarrow 0.$$ Here $\mathbf{NS}^{\tau}_{X/S,D}$ is the torsion subgroup scheme of the generalized Neron-Severi group and $ \mathbf{Alb}_{X/S,D}$ is the generalized Albanese scheme associated with the divisor $D$.
\end{thm}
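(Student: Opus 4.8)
The plan is to carry over, essentially verbatim, the argument of \cite{antei2011abelian} to the Kummer-flat (log-flat) topology of the log scheme $(X,D)$, systematically replacing the relative Picard scheme $\mathbf{Pic}_{X/S}$ by the generalized logarithmic Picard functor $\mathbf{Pic}_{X/S,D}=R^{1}f^{\log}_{*}\mathbb{G}_{m}$ computed in that topology. The starting point is the Tannakian/log dictionary recorded in the earlier sections: since $(\pi^{\text{log}}_{\text{Nori}}(X,x))^{\text{ab}}$ is a profinite commutative $S$-group scheme, for every finite locally free commutative $S$-group scheme $G$ there is a functorial identification $\mathrm{Hom}_{S\text{-gp}}\big((\pi^{\text{log}}_{\text{Nori}}(X,x))^{\text{ab}},G\big)\cong H^{1}_{\mathrm{kfl}}(X,G)_{x}$, the group of $x$-pointed $G$-torsors on $(X,D)$ in the Kummer-flat topology; here one uses that $x$ avoids $D$, so that $x$ is a strict section of $f$ and $x^{*}$ is a genuine retraction of $f^{*}$ on $H^{1}_{\mathrm{kfl}}$.

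The second step is the logarithmic Leray--Kummer computation. Since $f$ is proper and log-smooth with geometrically connected fibres, one has $f^{\log}_{*}\mathbb{G}_{m}=\mathbb{G}_{m,S}$ and $R^{1}f^{\log}_{*}\mathbb{G}_{m}=\mathbf{Pic}_{X/S,D}$. Writing $G=\underline{\mathbf{Hom}}(G^{\vee},\mathbb{G}_{m})$ and using the vanishing $\underline{\mathbf{Ext}}^{>0}_{\mathrm{kfl}}(G^{\vee},\mathbb{G}_{m})=0$ for $G^{\vee}$ finite locally free, the Leray spectral sequence of $f$ together with the adjunction $\mathrm{Ext}_{X}(f^{*}(-),-)=\mathrm{Ext}_{S}(-,Rf_{*}(-))$ produces the exact sequence $0\to H^{1}_{\mathrm{kfl}}(S,G)\to H^{1}_{\mathrm{kfl}}(X,G)\to \mathrm{Hom}_{S}(G^{\vee},\mathbf{Pic}_{X/S,D})\to H^{2}_{\mathrm{kfl}}(S,G)$. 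The section $x$ splits the first map and, exactly as in \cite{antei2011abelian}, forces the edge homomorphism into $H^{2}_{\mathrm{kfl}}(S,G)$ to vanish; hence $H^{1}_{\mathrm{kfl}}(X,G)_{x}\xrightarrow{\ \sim\ }\mathrm{Hom}_{S}(G^{\vee},\mathbf{Pic}_{X/S,D})$, and since $G^{\vee}$ is finite its image is torsion, so this equals $\mathrm{Hom}_{S}(G^{\vee},\mathbf{Pic}^{\tau}_{X/S,D})$.

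The third step combines this with Cartier duality. For finite $G$ we obtain $\mathrm{Hom}_{S}\big(G^{\vee},(\pi^{\text{log}}_{\text{Nori}}(X,x))^{\text{ab},\vee}\big)\cong \mathrm{Hom}_{S}(G^{\vee},\mathbf{Pic}^{\tau}_{X/S,D})$ naturally in $G$; since both targets are ind-finite (the Cartier dual of a profinite group scheme, resp.\ $\varinjlim_{n}\mathbf{Pic}^{\tau}_{X/S,D}[n]$) this yields $(\pi^{\text{log}}_{\text{Nori}}(X,x))^{\text{ab}}\cong \varprojlim_{n}(\mathbf{Pic}^{\tau}_{X/S,D}[n])^{\vee}$. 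Now apply $[n]$ to the defining sequence $0\to \mathbf{Pic}^{0}_{X/S,D}\to \mathbf{Pic}^{\tau}_{X/S,D}\to \mathbf{NS}^{\tau}_{X/S,D}\to 0$: because $\mathbf{Pic}^{0}_{X/S,D}$ is a semi-abelian scheme, multiplication by $n$ is fppf-surjective on it, so for every $n$ divisible by the exponent of the finite group scheme $\mathbf{NS}^{\tau}_{X/S,D}$ one gets a short exact sequence $0\to \mathbf{Pic}^{0}_{X/S,D}[n]\to \mathbf{Pic}^{\tau}_{X/S,D}[n]\to \mathbf{NS}^{\tau}_{X/S,D}\to 0$ of finite locally free commutative group schemes. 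Dualizing (Cartier duality being exact on such group schemes) and invoking the duality between the generalized Picard and the generalized Albanese established earlier, which identifies $(\mathbf{Pic}^{0}_{X/S,D}[n])^{\vee}\cong \mathbf{Alb}_{X/S,D}[n]$, we obtain $0\to (\mathbf{NS}^{\tau}_{X/S,D})^{\vee}\to (\mathbf{Pic}^{\tau}_{X/S,D}[n])^{\vee}\to \mathbf{Alb}_{X/S,D}[n]\to 0$. Passing to $\varprojlim_{n}$ along the cofinal system of such $n$: the kernels are constant equal to $(\mathbf{NS}^{\tau}_{X/S,D})^{\vee}$, so the system is Mittag--Leffler and $\varprojlim^{1}=0$; the middle limit is $(\pi^{\text{log}}_{\text{Nori}}(X,x))^{\text{ab}}$ by the previous step, and the limit sequence remains exact, which is the assertion.

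I expect the genuine work to sit entirely in the second step, i.e.\ in the logarithmic foundations: exactness of the Kummer sequence in the Kummer-flat topology of $(X,D)$, the cohomological flatness statement making $R^{1}f^{\log}_{*}\mathbb{G}_{m}$ representable by $\mathbf{Pic}_{X/S,D}$, the vanishing $\underline{\mathbf{Ext}}^{>0}_{\mathrm{kfl}}(G^{\vee},\mathbb{G}_{m})=0$, and the degeneration of the edge map $\mathrm{Hom}_{S}(G^{\vee},\mathbf{Pic}_{X/S,D})\to H^{2}_{\mathrm{kfl}}(S,G)$, which is precisely where the hypothesis that $x$ does not meet $D$ enters. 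By contrast, the torus part of $\mathbf{Pic}^{0}_{X/S,D}$ contributed by the components of $D$ causes no new trouble: a torus behaves like an abelian scheme both for surjectivity of multiplication by $n$ and for finite Cartier duality, so the Albanese step goes through unchanged once the foundational input is in place.
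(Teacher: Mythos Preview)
Your first two steps match the paper almost line for line: the identification $\mathrm{Hom}((\pi^{\text{log}}_{\text{Nori}})^{\text{ab}},G)\cong H^{1}_{\text{log fl}}(X,G)_{x}$, the log vanishing $\underline{\mathbf{Ext}}^{1}_{X^{\text{log}}_{\text{fl}}}(G,\mathbb{G}_m)=0$, the Grothendieck spectral sequence with the section $x$ splitting the $H^{2}$-edge map, and the conclusion $(\pi^{\text{log}}_{\text{Nori}}(X,x))^{\text{ab}}\cong \varprojlim_{n}(\mathbf{logPic}_{X/S}[n])^{\vee}$ are exactly the paper's Proposition~4.2 and Proposition~5.3.

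The divergence is in your third step. You posit a short exact sequence $0\to \mathbf{Pic}^{0}_{X/S,D}\to \mathbf{Pic}^{\tau}_{X/S,D}\to \mathbf{NS}^{\tau}_{X/S,D}\to 0$ with $\mathbf{Pic}^{0}_{X/S,D}$ a semi-abelian $S$-scheme dual to $\mathbf{Alb}_{X/S,D}$, and then take $n$-torsion and Cartier duals. The paper does \emph{not} set things up this way: it never shows that $\mathbf{logPic}_{X/S}=R^{1}f^{\log}_{*}\mathbb{G}_m$ is representable, let alone that it has a semi-abelian identity component. Indeed, the paper computes $\mathbf{logPic}_{X/S}/\mathbf{Pic}_{X/S}\cong\bigoplus_{i}\mathbb{Q}_{D_i}/\mathbb{Z}_{D_i}$, so $\mathbf{logPic}_{X/S}$ is an ind-object containing large divisible torsion and the decomposition you assume is not immediate. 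Instead the paper works entirely with two-term complexes of \emph{classical} objects: $C_{1}=[\bigoplus_{i}\mathbb{Z}_{D_i}\to \mathbf{Pic}_{X/S}]$ and $C_{0}=[(\bigoplus_{i}\mathbb{Z}_{D_i})^{0}\to \mathbf{Pic}^{0}_{X/S}]$, proves $\mathbf{logPic}_{X/S}[n]\cong H^{-1}(C_{1}\otimes^{\mathbf{L}}\mathbb{Z}/n)$ via the auxiliary complex $C_{2}$ with $\mathbb{Q}$-coefficients, and extracts the exact sequence from the triangle $C_{0}\to C_{1}\to \mathbf{NS}_{X/S,D}$; the Albanese duality appears as $(H^{-1}(C_{0}\otimes^{\mathbf{L}}\mathbb{Z}/n))^{\vee}=\mathbf{Alb}_{X/S,D}[n]$, which is consistent with the definition $\mathbf{Alb}_{X/S,D}=\underline{\mathbf{Ext}}^{1}(C_{0},\mathbb{G}_m)$.

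In short, your outline is morally correct and ends in the same place, but the phrase ``$\mathbf{Pic}^{0}_{X/S,D}$ is a semi-abelian scheme'' is doing unjustified work. Either supply a direct construction of that semi-abelian scheme and prove its $n$-torsion coincides with $\mathbf{logPic}_{X/S}[n]$ modulo $\mathbf{NS}^{\tau}$, or follow the paper and replace the third step by the derived-complex argument with $C_{0},C_{1}$, which sidesteps representability entirely.
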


Let  $D_i$ be the irreducible components of $D$. Consider the constant sheaf $\mathbb{Z}$ on the flat topology of $D_i$ and there is a map $f:D_i \to S$. We define  $\mathbb{Z}_{D_i}$ to be $f_{*}\mathbb{Z}$ where the direct image is taken on the flat topology. Then we have a homomorphism from $((\bigoplus_{1 \leq i \leq m} \mathbb{Z}_{D_i})$ to $\mathbf{NS}_{X/S}$ . We define the generalized Neron-Severi group $\mathbf{NS}_{X/S,D}$ as $\operatorname{coker}[((\bigoplus_{1 \leq i \leq m} \mathbb{Z}_{D_i}) \to \mathbf{NS}_{X/S} ] $ and $\mathbf{NS}^{\tau}_{X/S,D}$ is the torsion subgroup scheme of the group scheme $\mathbf{NS}_{X/S,D}$.
 
 We define $(\bigoplus_{1 \leq i \leq m} \mathbb{Z}_{D_i})^{0} $ as  $\operatorname{ker}[((\bigoplus_{1 \leq i \leq m} \mathbb{Z}_{D_i}) \to NS_{X/S} ] $. This give us a complex $C_{0} = [(\bigoplus_{1 \leq i \leq m} \mathbb{Z}_{D_i})^{0} \rightarrow \mathbf{Pic}^{0}_{X/S}]$ where $\mathbf{Pic}^{0}_{X/S}$ is the degree $0$ part. Then 
$\mathbf{Alb}_{X/S,D}$ is the scheme representing the 
functor $\underline{\mathbf{Ext}}^{1}(C_{0}, \mathbb{G}_m)$. $\mathbf{Alb}_{X/S,D}$ fits into an exact sequence of the form

$$0 \to \mathbf{Hom}((\bigoplus_{1 \leq i \leq m} \mathbb{Z}_{D_i})^{0}, \mathbb{G}_m) \to \mathbf{Alb}_{X/S,D} \to \mathbf{Alb}_{X/S} \to 0 .$$
\textit{Acknowledgement}. I would like to express my gratitude to Professor Kazuya Kato for his kind advice and feedback while writing this article without which this paper would have never been completed. I would like to thank Professor Chikara Nakayama for his help  especially for completing section 6 of this paper. I would also like to thank Professor Madhav Nori for his advice and answering several questions. 
\section{Nori fundamental group scheme} \label{sec2}
Let $X$ be a scheme over a Dedekind scheme $S$. Let $x$ be a $S$-point of $x: S \to X$. Consider the category $\mathcal{T}(X,x)$ whose objects consists of triples of the form $(G,T,t)$ where $G$ is a finite flat group scheme over $S$, $T$ is a $G$-torsor in the fqpc topology of $X$, and $t$ is a $S$-rational point which lies above $x$. A morphism in $\mathcal{T}(X,x)$ between $(G_1,T_1,t_1)$ to $(G_2,T_2,t_2)$ consists of a pair $g,f$ where $g$ is a group scheme homomorphism over $S$ and $f$ is a morphism of schemes from $T_1$ to $T_2$ over $X$ such that the following diagram commutes:

\begin{center}
\begin{tikzcd}
G_1 \times T_1 \arrow{r}{m_1} \arrow[swap]{d}{g \times f} &  T_1 \arrow{d}{f} \\
G_2 \times T_2 \arrow{r}{m_2} & T_2
\end{tikzcd}
\end{center}

where $m_1:G_1 \times T_1 \to T_1$ and $m_2:G_2 \times T_2 \to T_2$ is the action of $G_1$ on $T_1$ and action of $G_2$ on $T_2$ respectively.

The Nori fundamental group scheme is defined as follows

\begin{defn}
A profinite  flat group scheme $\pi_{\text{Nori}}(X,x)$ is called Nori fundamental group scheme if there is a triple $(\pi_{\text{Nori}}(X,x),T,t)$ such that $T$ is a $\pi_{\text{Nori}}(X,x)$-torsor over 
$X$ and $t$ is a $S$-rational point over $x$ and there exists a unique morphism from $
(\pi_{\text{Nori}}(X,x),T,t)$ to any object $(G',T',t')$ in $\mathcal{T}(X,x)$.
\end{defn}

If the category $\mathcal{T}(X,x)$ is cofiltered then  $
\pi_{\text{Nori}}(X,x)$ exists and one can define it as follows

$$ \pi^{\text{Nori}}_{log}(X) = \lim_{(G,T,\alpha) \in Ob(\mathcal{T}(X,x))} G.$$

Here limit is the projective limit of finite flat group schemes.  To prove that the category $\mathcal{T}(X,x)$ is cofiltered, it is enough to show that the category $\mathcal{T}(X,x)$ has fiber products. 

Now consider the following two conditions

\begin{itemize}
    \item (P1) $X \to S$ is locally of finite-type, separated, faithfully flat, and for all points $s \in S$ S, $X_s$ is reduced;
    \item (P2) $X \to S$ is locally of finite-type, separated, faithfully flat, integral and normal;
\end{itemize}

The following proposition was proved in \cite{gasbarri2020existence} (Theorem 4.2).
\begin{thm}
\label{torsorsfibreproductclassical}
Let $X$ be a connected scheme, locally finite and faithfully flat on S. Let $x \in X(S)$ be a point. If one of the conditions (P1) or (P2) is true, the category $\mathcal{T}(X,x)$ is cofiltered; moreover in  $\mathcal{T}(X,x)$  has fiber products.
\end{thm}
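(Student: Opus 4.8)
The plan is to deduce cofilteredness of $\mathcal{T}(X,x)$ from the existence of a terminal object together with fiber products. The category is non-empty and in fact has a terminal object, namely $(\{e\}_S, X, x)$, where $\{e\}_S$ is the trivial $S$-group scheme and $X$ is regarded as the trivial torsor under it: every triple $(G,T,t)$ admits a unique morphism to it, given by the trivial homomorphism $G \to \{e\}_S$ together with the structural (quotient) map $T \to X$, which is $G$-invariant and sends $t$ to $x$. Since a category with a terminal object and all fiber products possesses all finite limits—binary products being fiber products over the terminal object, and equalizers being built from pullbacks along diagonals—and since any category with finite limits is cofiltered, it suffices to produce fiber products in $\mathcal{T}(X,x)$. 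This is the substance of the second assertion, and the rest of the argument is devoted to it.

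So I would fix a diagram $(G_1,T_1,t_1)\xrightarrow{(g_1,f_1)}(G_3,T_3,t_3)\xleftarrow{(g_2,f_2)}(G_2,T_2,t_2)$. First I would construct the candidate over the generic point $\eta$ of the integral scheme $S$: over the field $\kappa(\eta)$ the fiber product $G_\eta := G_{1,\eta}\times_{G_{3,\eta}}G_{2,\eta}$ is a finite group scheme, since over a field a fiber product of finite group schemes is automatically flat, and $T_\eta := T_{1,\eta}\times_{T_{3,\eta}}T_{2,\eta}$ is a $G_\eta$-torsor over $X_\eta$, because a fiber product of torsors along a common torsor is a torsor under the corresponding fiber-product group. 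Next I would spread these out over $S$ by schematic closure: let $G$ be the schematic closure of $G_\eta$ inside the finite $S$-scheme $G_1\times_S G_2$, and let $T$ be the schematic closure of $T_\eta$ inside $T_1\times_X T_2$. Over a Dedekind base the schematic closure of a flat generic fiber is flat, being simply the quotient by the $\mathcal{O}_S$-torsion, so $G$ is finite and flat over $S$; moreover the group operations of $G_1\times_S G_2$ preserve $G$, because they agree on the schematically dense generic fiber $G_\eta$, so $G$ is a finite flat $S$-group scheme with generic fiber $G_\eta$.

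The main obstacle is to show that $T\to X$ is a genuine $G$-torsor, and it is here that the hypotheses (P1) and (P2) are indispensable. The only delicate point is faithful flatness of $T\to X$; granting it, the torsor identity $G\times_S T \xrightarrow{\sim} T\times_X T$ may be checked after faithfully flat descent along $T\to X$, or on the dense generic fiber where it is exactly the torsor condition on $T_\eta$. By construction $T$ is flat over $S$ and $T_\eta\to X_\eta$ is faithfully flat, so by the fibral flatness criterion it remains only to control the closed fibers $T_s\to X_s$ over the closed points $s\in S$. This is precisely the step that fails without hypotheses on $X$: reducedness of the fibers $X_s$ under (P1), or integrality and normality of $X$ under (P2), prevent the schematic closure from collapsing or acquiring embedded components over $s$, and thereby force the generic faithful flatness to propagate to faithful flatness of $T_s\to X_s$, hence of $T\to X$. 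Finally the point $t:=(t_1,t_2)\in T(S)$ lies over $x$ and is compatible with $t_3$, so the two projections furnish morphisms $(G,T,t)\to(G_i,T_i,t_i)$; the universal property of schematic closure—any competing object being flat over $S$, hence mapping into $T$—exhibits $(G,T,t)$ as the fiber product, which completes the construction and the proof that $\mathcal{T}(X,x)$ is cofiltered.
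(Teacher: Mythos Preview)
The paper does not prove this theorem itself; it is quoted from \cite{gasbarri2020existence} (Theorem~4.2), and the only content the paper adds is Remark~\ref{sharp_definition}, which describes the fiber product as $((G_1\times_{G_0}G_2)^{\sharp},(T_1\times_{T_0}T_2)^{\sharp},t_1\times_{t_0}t_2)$ via schematic closure. Your construction is exactly this one, and your reduction of cofilteredness to the existence of fiber products via the terminal object $(\{e\}_S,X,x)$ is correct and standard.

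That said, the paragraph in which you invoke (P1) and (P2) is where the entire substance of the cited result lies, and what you have written there is not an argument but a restatement of the desired conclusion. The fibral flatness criterion does reduce the question to flatness of $T_s\to X_s$ for closed $s\in S$, but the sentence ``reducedness of the fibers \ldots\ or integrality and normality of $X$ \ldots\ prevent the schematic closure from collapsing or acquiring embedded components \ldots\ and thereby force the generic faithful flatness to propagate'' does not explain \emph{why} these hypotheses have that effect. Note that $T_1\times_{T_0}T_2$ is already a torsor under $G_1\times_{G_0}G_2$ over $X$; the delicate point is that passing to $(-)^{\sharp}$ on both the group and the torsor side simultaneously preserves the torsor property, and this is precisely what can fail without (P1) or (P2) and what the argument in \cite{gasbarri2020existence} actually establishes. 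Since the paper itself defers to that reference, your outline is consistent with the paper's treatment, but as a self-contained proof it has a genuine gap at this step.
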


\begin{rem}
\label{sharp_definition}
Let $V_1 = (G_1,T_1,t_1)$, $V_2 =(G_2,T_2,t_2)$, $V_{0}=(G_{0},T_{0},t_0)$ be objects in $T(X,x)$.  Let $(g_1,f_1)$ and $(g_2,f_2)$ be morphism from $(G_1,T_1,t_1)$ to $(G_0,T_0,t_0)$ and $(G_2,T_2,t_2)$ to $(G_0,T_0,t_0)$. Now consider the triple $W = (G_1 \times_{G_0} G_2, T_1 \times_{T_0} T_2, t_1 \times_{t_0} t_2)$. Note that the  $W$ and $V_1 \times_{V_0} V_2$ may not be the same. In fact, there is no 
guarantee that $W$ lies in the category $T(X,x)$ as $G_1  
\times_G G_2$ may not be flat over $S$. However, if $X$ is a 
scheme over a field $k$, then $W$ and $V_1 \times_{V} V_2$ are 
the same. The fiber product is obtained as follows. Let $Y$ be a
scheme over $S$. Let $\eta$ be the generic point of $S$.Then 
we define $Y^{\sharp}$ to be the schematic closure of 
$Y_{\eta} = Y \times_{S} \eta$ in $Y$. That is
$Y^{\sharp}$ is the closed subscheme 
$\spec(\mathcal{O}_{Y}/\text{tor})$ of $Y$ where $\text{tor}$ 
represents the torsion part as an $\mathcal{O}_S$ module . Then $V_1 \times_{V} V_2$
is $(G_3, T_3, t_3)$ where $G_3 = (G_1 \times_{G_0} 
G_2)^{\sharp}$ and $T_3 = (T_1 \times_{T_0} T_2)^{\sharp}$. 
\end{rem}

\section{Log Nori fundamental group scheme}
The existence of log Nori fundamental group scheme for log schemes over field $k$ was proved in \cite{sen2020log}. In this section we extend the definition to log schemes over Dedekind schemes. Let $S$ be a connected Dedekind scheme and $X$ be  a smooth connected scheme over $S$ . Let $D$ be a divisor with no multiplicity of $X$ such that the irreducible components of $D$ and as well their intersections are smooth over $S$. Now if we endow $X$ with the log structure associated with $D$ then the structure morphism from $X$ to $S$ is log-smooth. Let $x: S \to X$ be a $S$-point such that it doesn't intersect $D$. Let $\mathcal{T}^{\text{log}}(X,x)$ denote the following category: the objects of this category are triples of the form $(G,T,\alpha)$ where $T$ is a $G$-torsor on $X_{\text{fl}}^{\text{log}}$ site and $G$ is a classical finite flat group scheme over  $S$ with strict structure morphism and $\alpha$ is a trivialization $T|_{x} \cong G$. The morphisms between two objects in $\mathcal{T}^{\text{log}}(X,x)$, $( G_1, T_1, \alpha_1)$ and $(G_2, T_2, \alpha_2)$ are of the form $(g,f )$ where $g$ is group scheme homomorphism from $G_1$ to $G_2$ and $f$ is a morphism from $T_1$ to $T_2$ that respects the group action and the trivializations. A profinite group scheme $\pi^\text{log}_{\text{Nori}}(X,x)$ is called  log fundamental group scheme of $(X,x)$ if there is a triple $(\pi^\text{log}_{\text{Nori}}(X,x),T,\tau)$ such that $T$ is a $\pi^\text{log}_{\text{Nori}}(X,x)$-torsor over $X$ in the log flat topology and $\tau$ is trivialization and there exists a unique morphism from  $(\pi^\text{log}_{\text{Nori}}(X,x),T,\tau)$ to any object in $\mathcal{T}^{\text{log}}(X,x)$.

\begin{rem}
The torsors $T$ in the category  $\mathcal{T}^{\text{log}}(X,x)$ may not be representable and therefore we are looking at torsors which are actually sheaf where as the torsors in the category  $\mathcal{T}(X,x)$ (classical flat topology) are representable.
\end{rem}

Then we have the following theorem proved in this paper

\begin{thm}
\label{fgrpschm}
If $X$ satisfies the conditions mentioned above then $\pi^\text{log}_{\text{Nori}}(X,x)$ exists.
\end{thm}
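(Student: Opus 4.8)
The plan is to reduce the existence of $\pi^{\text{log}}_{\text{Nori}}(X,x)$ to the criterion established in the classical setting, namely that $\mathcal{T}^{\text{log}}(X,x)$ is a cofiltered category, since then the projective limit $\varprojlim_{(G,T,\alpha)} G$ over all objects carries a canonical torsor and satisfies the required universal property. As in Remark~\ref{sharp_definition}, cofilteredness will follow once I show that $\mathcal{T}^{\text{log}}(X,x)$ has fiber products, so the entire argument concentrates on constructing $V_1 \times_{V_0} V_2$ for a diagram $V_1 \to V_0 \leftarrow V_2$ in $\mathcal{T}^{\text{log}}(X,x)$.

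First I would form the naive candidate $W = (G_1 \times_{G_0} G_2,\ T_1 \times_{T_0} T_2,\ t_1 \times_{t_0} t_2)$: the fiber product of sheaves $T_1 \times_{T_0} T_2$ on $X^{\log}_{\mathrm{fl}}$ is naturally a torsor under the group scheme $G_1 \times_{G_0} G_2$, and the trivializations glue over the point $x$. The obstruction, exactly as in the classical case, is that $G_1 \times_{G_0} G_2$ need not be flat over the Dedekind base $S$, so it need not be an object of the category. To fix this I would apply the $\sharp$-construction of Remark~\ref{sharp_definition}: let $\eta$ be the generic point of $S$, set $G_3 = (G_1 \times_{G_0} G_2)^{\sharp}$, the schematic closure of the generic fiber, which is finite and flat over the Dedekind scheme $S$ (torsion-free over a Dedekind domain plus finite type gives flat), and correspondingly take $T_3$ to be an appropriate $\sharp$-type reduction of the sheaf $T_1 \times_{T_0} T_2$ so that it becomes a $G_3$-torsor on $X^{\log}_{\mathrm{fl}}$. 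The key points to verify are: (i) $T_3$ is again a log flat torsor under $G_3$ — here I would check locally that over the generic fiber nothing changes and that the sheaf-theoretic torsor condition is preserved by the closure operation, using that $X \to S$ is flat so that $X_\eta$ is schematically dense in $X$; (ii) the trivialization $\alpha$ descends to $T_3|_x$, which is immediate since $x$ does not meet $D$ and $x$ maps into the generic fiber direction compatibly; and (iii) $(G_3, T_3, t_3)$ together with the evident projections is the genuine categorical fiber product in $\mathcal{T}^{\text{log}}(X,x)$, which amounts to the universal property: any test object mapping compatibly to $V_1$ and $V_2$ has flat structure group, hence factors through the generic fiber and therefore through the schematic closure $G_3$.

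The main obstacle I anticipate is step (i): controlling the sheaf-theoretic torsor $T_1 \times_{T_0} T_2$ on the log flat site under the passage to schematic closure. Unlike the classical case, where $T$ is a representable scheme and one can literally take $\spec(\mathcal{O}_T/\mathrm{tor})$, here $T$ is only a sheaf, so I must phrase the $\sharp$-construction sheaf-theoretically — presumably by choosing a log flat cover $U \to X$ that trivializes both $T_1$ and $T_2$, performing the schematic-closure construction on the resulting $G_i$-valued transition data over $U \times_X U$, and then checking that the descent datum still satisfies the cocycle condition after reduction. The verification that this is independent of the chosen cover, and that it interacts correctly with the log structure associated to $D$ (in particular that log flatness of the cover is preserved), is where the technical weight lies. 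Once fiber products are in hand, Theorem~\ref{torsorsfibreproductclassical}-style reasoning gives cofilteredness, and the universal torsor over the limit group scheme is constructed by the standard inverse-limit argument, completing the proof of Theorem~\ref{fgrpschm}.
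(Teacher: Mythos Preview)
Your strategy matches the paper's: reduce existence to showing $\mathcal{T}^{\text{log}}(X,x)$ has fiber products, form the naive fiber product, and repair flatness via the $\sharp$-construction. The execution differs in one useful respect. You propose to \emph{trivialize} the torsors on some log flat cover and then apply $\sharp$ to the $G_i$-valued cocycles on $U \times_X U$, which leaves you with exactly the verifications you yourself flag (flatness of $U \times_X U$ over $S$, independence of cover, compatibility with the log structure). The paper instead builds an explicit Kummer cover $U = \sqcup_\lambda U_{\lambda,n}$ from local equations of $D$ and the $n$-th power map on $\mathbb{A}^{r(\lambda)}_S$; this $U$ is smooth over $S$, and on it the pullbacks $Y_i'$ become \emph{classical} (representable, not necessarily trivial) fppf torsors. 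One then invokes the classical fiber-product construction of Remark~\ref{sharp_definition} and \cite{gasbarri2020existence} directly on $U$ to obtain a scheme $Y_3' = (Y_1' \times_{Y_0'} Y_2')^\sharp$, and the descent isomorphism $p_1^* Y_3' \cong p_2^* Y_3'$ holds simply because $\sharp$ commutes with the flat projections $p_i \colon U \times_X U \to U$. The explicit cover and the reduction to classical torsors (rather than to cocycles) are what make the argument short; your route should also work, but would need the same smoothness-over-$S$ control on $U$ spelled out to push the cocycles into $G_3$.
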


We can define the \textit{log Nori fundamental group scheme} of $X$ as follows

$$ \pi^\text{log}_{\text{Nori}}(X,x) = \lim_{(G,T,\alpha) \in Ob(\mathcal{T}^{\text{log}}(X,x))} G.$$

Here limit is the projective limit of finite flat group schemes. In general, this limit may not exists. But it does exist if the index over which the limit is taken is small and cofiltered.

To prove that the category $\mathcal{T}^{\text{log}}(X,x)$ is cofiltered, we will show that the category $\mathcal{T}^{\text{log}}(X,x)$ has fiber products. 

Here is the brief outline of the proof. Consider three objects $V_1= ( G_1, Y_1, y_1)$ ,$V_2 = ( G_2, Y_2, y_2)$ and $V_0= ( G_0, Y_0, y_0)$ in $\mathcal{T}^{\text{log}}(X,x)$. Let $(g_1,f_1)$ and $(g_2,f_2)$ be morphism from $V_1$ to $V_0$ and $V_2$ to $V_0$. We find a Kummer log flat covering $U$ of $X$ such that the pullback of our torsors $Y_1$,$Y_2$ and $Y_0$ to $U$ are a classical torsors in the flat topology of $U$. If $U$ satisfies the property $P1$ then we can use \ref{torsorsfibreproductclassical} to compute the fiber product on $U$ and use descent of sheaves in log flat topology of $X$ to get a new torsor in log flat topology which is the fiber product of $V_1$,$V_2$ and $V_0$.

We first explain to how to construct the log flat covering $U$ described above. There exists some open covering of $(U_{\lambda})_{\lambda \in \Lambda}$ of $X$ and a morphism $g: U_{\lambda} \to \mathbb{A}_{S}^{r(\lambda)}$ given by a homomorphism of $\mathcal{O}_{S}$-algebras $\mathcal{O}_S[T_1, T_2, \ldots, T_{r(\lambda)}] \to g_{*} \mathcal{O}_{U_{\lambda}}$ where $T_i$ goes to $ t_i \in \Gamma(U_{\lambda}, \mathcal{O}_{U_{\lambda}})$ and $D \cap U_{\lambda} = \cup^{r(\lambda)}_{i=1}\{x \in {U_{\lambda}} : t_i(x)=0\}$.

For $n \geq 1$, consider $U_{\lambda,n } = \mathbb{A}_{S}^{r(\lambda)} \times_{\mathbb{A}_{S}^{r(\lambda)} } U_{\lambda}$, where the map $\mathbb{A}_{S}^{r(\lambda)} \to \mathbb{A}_{S}^{r(\lambda)}$ is given by the by a homomorphism of $\mathcal{O}_{S}$-algebras $\mathcal{O}_S[T_1, T_2, \ldots, T_{r(\lambda)}] \to  \mathcal{O}_S[T_1, T_2, \ldots, T_{r(\lambda)}]$ where $T_i \mapsto T_i^n$. The log structure on  $\mathbb{A}_{S}^{r(\lambda)}$ is given by $T_i$. The log structure on $U_{\lambda,n }$ is the inverse image of the log structure on $\mathbb{A}_{S}^{r(\lambda)}$ via the projection map of the fiber product. Note that $U_{\lambda,n }$ is smooth over $S$. Then we can define $U = \sqcup_{\lambda} U_{\lambda,n }$ for some $n \geq 1$.

\begin{thm}
\label{torsorsfibreproduct}
If $X$ satisfies the conditions mentioned above then the fiber product exists in $\mathcal{T}^{\text{log}}(X,x)$. 
\end{thm}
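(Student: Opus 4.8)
The plan is to follow the outline sketched just before the statement, making each step precise. First I would construct the Kummer log flat covering $U$ explicitly: using the local charts $g\colon U_\lambda \to \mathbb{A}_S^{r(\lambda)}$ with $D\cap U_\lambda$ cut out by the coordinates $t_i$, I form $U_{\lambda,n}$ by the $n$-th power map on $\mathbb{A}_S^{r(\lambda)}$, endow it with the pulled-back log structure, and set $U = \bigsqcup_\lambda U_{\lambda,n}$ for a single $n$ to be chosen. The point of passing to $U$ is that the log structure becomes, Zariski-locally, a product of copies of the standard log point pulled back from $\mathbb{A}_S^1$ with coordinate admitting an $n$-th root, so that a Kummer log flat $G$-torsor on $X$ restricts to a torsor in the \emph{classical} fppf topology on $U$ once $n$ is divisible by the orders relevant to $G_0, G_1, G_2$ (for instance a common multiple of the exponents of the group schemes, or more safely an $n$ killing the relevant $\mu$-parts; since $G_i$ are finite flat over $S$, some $n$ works simultaneously for all three). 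This is essentially the log-to-classical comparison that underlies Kato--Nakayama-type results, and I would cite or reprove that a Kummer covering of this shape trivializes the log structure enough to make $\mathrm{Kfl}$-torsors classical.

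Next I would verify that $U$ (equivalently each $U_{\lambda,n}$) satisfies condition (P1): it is locally of finite type, separated and faithfully flat over $S$ because $U_\lambda$ is and the $n$-th power map on affine space is finite flat and surjective; and for each $s\in S$ the fiber $(U_{\lambda,n})_s$ is reduced because $X_s$ is smooth hence reduced, $D$ and its strata are smooth over $S$ so the $t_i$ restrict to a regular system of parameters transverse to the special fiber, and adjoining $n$-th roots of such coordinates over a reduced (indeed smooth away from $D$) base keeps the total space with reduced fibers — here I would use the normal-crossings hypothesis on $D$ together with smoothness of the strata over $S$ to control the fiber of $U_{\lambda,n}$ directly in coordinates. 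With (P1) in hand, Theorem~\ref{torsorsfibreproductclassical} gives that $\mathcal{T}(U,x_U)$ has fiber products, where $x_U$ is the lift of $x$ (which exists and is canonical since $x$ avoids $D$, so the covering is étale near $x$ and the trivializations $\alpha_i$ pin down a point). Pulling back $V_1, V_2, V_0$ along $X \to$ (the sheaf quotient) and then to $U$, I obtain classical torsors with compatible maps, form their fiber product $(G_3, T_3^U, t_3)$ there using the $\sharp$-construction of Remark~\ref{sharp_definition}, i.e. $G_3 = (G_1\times_{G_0} G_2)^\sharp$ and $T_3^U = (T_1\times_{T_0} T_2)^\sharp|_U$.

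Then I would descend. The Kummer log flat covering $U \to X$ is a covering in $X^{\mathrm{log}}_{\mathrm{fl}}$, so sheaves on $X^{\mathrm{log}}_{\mathrm{fl}}$ satisfy descent along it; I would check that the descent datum on $T_1\times_{T_0}T_2$ over $X$ (coming from the descent data of $T_1, T_2, T_0$ themselves, which exist since those are genuine $\mathrm{Kfl}$-torsors on $X$) is compatible with the $\sharp$-construction — the key observation being that $\sharp$ is functorial and commutes with flat base change on $S$, hence with the base change $U\times_X U \to U$ which is the identity on the $S$-direction, so the descent datum restricts to $T_3^U$. This yields a sheaf $T_3$ on $X^{\mathrm{log}}_{\mathrm{fl}}$ which is a $G_3$-torsor, together with the trivialization $\alpha_3 = \alpha_1\times_{\alpha_0}\alpha_2$ over $x$ (legitimate because over $x$ everything is already classical and split). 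Finally I would check the universal property: any object $W'=(G',T',\alpha')$ with maps to $V_1$ and $V_2$ agreeing over $V_0$ induces, after pullback to $U$, a unique map to $(G_3,T_3^U)$ by the universal property in $\mathcal{T}(U,x_U)$, and uniqueness plus descent forces a unique map over $X$; compatibility of $G'$ being classical finite flat with strict structure morphism is preserved since $G_3$ is a closed subgroup scheme of $G_1\times_{G_0}G_2$ hence finite flat over $S$ after $\sharp$, with strict (indeed trivial) log structure.

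The main obstacle I expect is the second step: showing that the Kummer covering $U_{\lambda,n}$ has reduced fibers over every $s\in S$, i.e. that (P1) genuinely holds, and simultaneously that a \emph{single} $n$ can be chosen that both makes all three torsors classical on $U$ and keeps (P1) — the tension is that large $n$ helps trivialize log structure but can, a priori, introduce ramification in the fibers over primes dividing $n$, so I would need the normal-crossings/smoothness hypotheses on $D$ and its strata over $S$ to show that the local models $\mathcal{O}_S[T_i]/(T_i^n - t_i)$ stay reduced on each special fiber even at such primes, using that $t_i$ is part of a regular sequence cutting out a divisor smooth over $S$. Handling potential wild ramification (when $\mathrm{char}\,k(s) \mid n$) is the delicate point, and I would address it by noting that the relevant torsors are pulled back from the log structure, so only the \emph{tame} part of the $n$-th root matters for trivialization, allowing $n$ to be taken prime to all residue characteristics involved — or, if that is not available, by a more careful local computation showing reducedness survives regardless.
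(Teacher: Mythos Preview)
Your proposal follows essentially the same route as the paper: pass to the Kummer cover $U=\bigsqcup_\lambda U_{\lambda,n}$, form the $\sharp$-fiber product there as a classical torsor, and descend along the log flat cover using that $(-)^\sharp$ commutes with flat pullback. The paper cites Proposition~3.1 of \cite{gasbarri2020existence} directly for the fact that $(Y_1'\times_{Y_0'}Y_2')^\sharp$ is a $G_3$-torsor, rather than invoking Theorem~\ref{torsorsfibreproductclassical} via (P1), and it handles the base point by checking that the section $y_3$ of $x^*(Y_1\times_{Y_0}Y_2)$ lands in $Y_3$ after pulling back to $S'=S\times_X U$, rather than lifting $x$ to a point of $U$.

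Your identified ``main obstacle'' is not one, and your proposed workaround would actually break the proof. The paper observes (just before the theorem statement) that $U_{\lambda,n}$ is \emph{smooth} over $S$: under the relative SNC hypotheses on $D$ one can refine the cover so that $g\colon U_\lambda\to\mathbb{A}^{r(\lambda)}_S$ is smooth, and then the second projection $U_{\lambda,n}\to\mathbb{A}^{r(\lambda)}_S$ is a smooth base change of $g$, so the composite $U_{\lambda,n}\to\mathbb{A}^{r(\lambda)}_S\to S$ is smooth regardless of whether the residue characteristics divide $n$. Hence (P1), and indeed much more, holds for $U$ with no restriction on $n$. Your suggestion to take $n$ prime to all residue characteristics is both unnecessary and fatal: a torsor under a group scheme of $p$-power order (say $\mu_p$) cannot in general be made classical by a Kummer cover of degree prime to $p$, so restricting $n$ would prevent you from trivializing the $Y_i$ in the first place. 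Drop the ramification discussion, record that $U_{\lambda,n}\to S$ is smooth, and the rest of your argument goes through as in the paper.
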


\begin{proof}
Let $Y$ be a scheme over $S$. Let $\eta$ be the generic point of $S$. Then we define $Y^{\sharp}$ to be the schematic closure of 
$Y_{\eta} = Y \times_{S} \eta$ in $Y$ (see 
\ref{sharp_definition}). Now consider three objects in $V_1= ( 
G_1, Y_1, y_1)$ ,$V_2 = ( G_2, Y_2, y_2)$ and $V_0= ( G_0, Y_0, 
y_0)$ in $\mathcal{T}^{\text{log}}(X,x)$. We will prove that the fiber product  $V_1 \times_{V_0} V_2$ exists in the category 
$\mathcal{T}^{\text{log}}(X,x)$. For some $n$, we can find a 
covering $U = \sqcup_{\lambda} U_{\lambda,n }$ such that the 
pullback $Y_i^{'}$ of $Y_i$ to $U$ is a classical $G_i$-torsor 
for $i \in \{0,1,2\}$. Now using Proposition 3.1 in 
\cite{gasbarri2020existence}, we see that the $Y_3^{'} = 
(Y_1^{'} \times _{{Y_0}^{'}} Y_2^{'})^{\sharp }$ is a classical 
$G_3$-torsor on $U$ where $G_3 = (G_1 \times_{G_0} 
G_2)^{\sharp}$. We now prove that $Y_3^{'}$ descends to a sheaf 
$Y_3$ on the Kummer log flat topology of $X$.
Consider the $i$-th projection maps $p_i : U\times_{X}U \to U$.
Let $Y_i^{''}$ represents the pullback of $Y_i$ to $ U 
\times_{X} U$ for $i \in \{0,1,2\}$. We have $p_1^{*} Y_3^{'} = 
(Y_1^{''} \times _{{Y_0}^{''}} Y_2^{''})^{\sharp } = p_2^{*} 
Y_3^{'}$ as the morphisms $p_i$ are flat. The isomorphism
$p_1^{*} Y_3^{'} \cong p_2^{*} Y_3^{'}$ is the descent datum of 
$Y_3^{'}$ and it descends to a unique sheaf $Y_3$ on $X$. Now, 
there is a natural action of $G_3$ on $Y_3$ and its pullback to 
$U$ which is log flat cover is a $G_3$-torsor. Therefore, $Y_3$ 
is a $G_3$-torsor.

Now, we explain how to get $y_3$. Recall we have a section $x: S \to X$. We have $Y_3 \subset Y_1 \times_{Y_0} Y_2$. Let $y_3$ be section of $x^{*}(Y_1 \times_{Y_0} Y_2)$ induced by $y_1$,$y_2$ and $y_0$. We show that $y_3$ belongs to $Y_3$. Let $S^{'} = S \times_{X} U$ then we have $x^{'}: S^{'} \to U$. We have a section $y_3^{'}$ of $(x^{'})^{*}(Y_1^{'} \times _{{Y_0}^{'}} Y_2^{'})$ induced by $y_3$. Now the $S$-morphism $y_3^{'}: S^{'} \to (Y_1^{'} \times _{{Y_0}^{'}} Y_2^{'})$ factors through $Y_3^{'} = (Y_1^{'} \times _{{Y_0}^{'}} Y_2^{'})^{\sharp }$ as $S'$ is flat over $S$. So, we have $y_3^{'}$ belongs to $(x^{'})^{*}(Y_3^{'} )$. Therefore, we have  $y_3$ belongs to $Y_3$.
\end{proof}

The Theorem \ref{fgrpschm} now follows from Theorem \ref{torsorsfibreproduct}.

\section{The log Picard scheme}
Let $S$ be a connected Dedekind Scheme and $X= (X,\mathcal{O}_X,M_X)$ a connected fine saturated log scheme over $S$ . Let $f$ be the structure morphism from $X$ to $S$. We also assume  $\mathcal{O}_S \simeq f_{*}(\mathcal{O}_X)$. Then we define $\mathbf{logPic}_{X/S}$ to be the sheaf  $R^1f_{*} \mathbb{G}_m$. Note that $\mathbb{G}_m$ is a sheaf on the log flat topology and $\mathbf{logPic}_{X/S}$ is sheaf on the classical flat topology of $U$. Now we can use the Theorem 

\begin{rem}
Note that the log Picard functor being described in this paper is different from the notion of usual log Picard functor described in \cite{olsson2004semistable} . The usual log Picard functor represents the group $H^1(X,M^{\text{gp}})$ where as our log Picard functor represents $H^1_{\text{log fl}}(X,\mathbb{G}_m)$.

\end{rem}

\begin{prop}
\label{picisomorphism}
Suppose there is a section from $s:S \to X$ to the structure morphism from $f:X \to S$. Then, for any finite flat commutative group scheme $G$ over $S$, we have $R^1f_{*}(G_X) \xrightarrow{\sim} \underline{\mathbf{Hom}_{S}}(G^{\vee},\mathbf{logPic}_{X/S})$, where $G^{\vee}$ is the Cartier dual of $G$.
\end{prop}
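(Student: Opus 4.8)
The plan is to compute the hyper-derived functors of a single composite functor in two different ways — using the adjunction $f^{*}\dashv f_{*}$ as the bridge — and to read the statement off the two resulting Grothendieck spectral sequences. The first ingredient is Cartier duality: since $G$ is a classical finite flat commutative group scheme over $S$ one has $(G^{\vee})^{\vee}\cong G$, and because $G$, $G^{\vee}$ and $\mathbb{G}_{m}$ are all pulled back along the canonical morphism of topoi from the classical flat topos of the underlying scheme of $X$ to its Kummer log flat topos — a morphism whose inverse image is exact and monoidal, hence preserves the finite locally free resolutions used to compute $\underline{\mathbf{Hom}}$ and $\underline{\mathbf{Ext}}$ — the isomorphism persists on the log flat site:
$$G_{X}\;\cong\;\underline{\mathbf{Hom}}_{X}(f^{*}G^{\vee},\mathbb{G}_{m}).$$
For the same reason the standard ``derived perfectness'' of Cartier duality, $R\underline{\mathbf{Hom}}_{\mathrm{fl}}(G^{\vee},\mathbb{G}_{m})\simeq G[0]$ — equivalently $\underline{\mathbf{Ext}}^{q}_{X}(f^{*}G^{\vee},\mathbb{G}_{m})=0$ for all $q\geq 1$ — holds over the log flat site as well, and the analogous vanishing $\underline{\mathbf{Ext}}^{p}_{S}(G^{\vee},\mathbb{G}_{m,S})=0$, $p\geq 1$, holds over $S$.

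Next, run the Grothendieck spectral sequence of the composite $f_{*}\circ\underline{\mathbf{Hom}}_{X}(f^{*}G^{\vee},-)$, namely $E_{2}^{p,q}=R^{p}f_{*}\,\underline{\mathbf{Ext}}^{q}_{X}(f^{*}G^{\vee},\mathbb{G}_{m})\Rightarrow\mathbb{H}^{p+q}$: the vanishing above collapses it onto the row $q=0$, so $\mathbb{H}^{n}\cong R^{n}f_{*}(G_{X})$. On the other hand the adjunction gives $f_{*}\underline{\mathbf{Hom}}_{X}(f^{*}G^{\vee},-)=\underline{\mathbf{Hom}}_{S}(G^{\vee},f_{*}(-))$, and since $f^{*}$ is exact, $f_{*}$ carries injectives to injectives, so the Grothendieck spectral sequence of $\underline{\mathbf{Hom}}_{S}(G^{\vee},-)\circ f_{*}$ reads ${}'E_{2}^{p,q}=\underline{\mathbf{Ext}}^{p}_{S}(G^{\vee},R^{q}f_{*}\mathbb{G}_{m})\Rightarrow\mathbb{H}^{p+q}$. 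Here $R^{0}f_{*}\mathbb{G}_{m}=\mathbb{G}_{m,S}$ — this uses $\mathcal{O}_{S}\cong f_{*}\mathcal{O}_{X}$, and it is here that the section $s$ does its work: it forces the generic fibre of $f$ to have $\kappa(\eta)$ as its ring of global functions, hence to be geometrically connected, whence $\mathcal{O}_{S}\cong f_{*}\mathcal{O}_{X}$ — and $R^{1}f_{*}\mathbb{G}_{m}=\mathbf{logPic}_{X/S}$ by definition. By the vanishing over $S$, ${}'E_{2}^{p,0}=0$ for $p\geq 1$, so in total degree $1$ the only surviving term is ${}'E_{2}^{0,1}=\underline{\mathbf{Hom}}_{S}(G^{\vee},\mathbf{logPic}_{X/S})$ — incoming differentials come from negative columns, and the only outgoing one, $d_{2}\colon{}'E_{2}^{0,1}\to{}'E_{2}^{2,0}=\underline{\mathbf{Ext}}^{2}_{S}(G^{\vee},\mathbb{G}_{m,S})$, has zero target. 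Hence $\mathbb{H}^{1}\cong\underline{\mathbf{Hom}}_{S}(G^{\vee},\mathbf{logPic}_{X/S})$, and comparing the two spectral sequences gives $R^{1}f_{*}(G_{X})\cong\underline{\mathbf{Hom}}_{S}(G^{\vee},\mathbf{logPic}_{X/S})$. One then traces the edge morphisms to confirm that the isomorphism is the natural one, sending the class of a $G$-torsor $T$ to the character-pushout homomorphism $\chi\mapsto[\,\chi_{*}T\,]$ for $\chi\colon G\to\mathbb{G}_{m}$.

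The step I expect to be the main obstacle is the derived Cartier-duality vanishing on the \emph{Kummer log flat} site, $\underline{\mathbf{Ext}}^{q}_{X^{\mathrm{log}}_{\mathrm{fl}}}(f^{*}G^{\vee},\mathbb{G}_{m})=0$ for $q\geq 1$: one must verify that $R\underline{\mathbf{Hom}}(G^{\vee},\mathbb{G}_{m})$ can genuinely be computed on the classical flat site, i.e.\ that passing to the log flat topos introduces no new extensions of $G^{\vee}$ by $\mathbb{G}_{m}$. A more routine point is the ``injectives to acyclics'' hypothesis of the two Grothendieck spectral sequences: for the second it is exactness of $f^{*}$; for the first it reduces to $f^{*}G^{\vee}$ being, locally on $X$, resolved by finite free $\mathbb{Z}$-modules, so that $\underline{\mathbf{Hom}}_{X}(f^{*}G^{\vee},I)$ is $f_{*}$-acyclic for injective $I$. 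If one prefers to avoid invoking the vanishing of $\underline{\mathbf{Ext}}^{2}$ over $S$, the section $s$ can be used instead to kill the potentially obstructing differential $d_{2}\colon{}'E_{2}^{0,1}\to{}'E_{2}^{2,0}$ directly, using that $s^{*}$ splits $f^{*}$ on cohomology.
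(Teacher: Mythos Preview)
Your architecture is exactly the paper's: write $F(H)=f_{*}\underline{\mathbf{Hom}}_{X}(f^{*}G^{\vee},H)=\underline{\mathbf{Hom}}_{S}(G^{\vee},f_{*}H)$, run the two Grothendieck spectral sequences for this composite, and compare them in degree $1$. The paper also isolates the log-flat vanishing $\underline{\mathbf{Ext}}^{1}_{X^{\mathrm{log}}_{\mathrm{fl}}}(G^{\vee},\mathbb{G}_m)=0$ as the key lemma, just as you flag.

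Where your main line deviates from the paper, and where it has a genuine gap, is the claimed ``derived perfectness'' $R\underline{\mathbf{Hom}}(G^{\vee},\mathbb{G}_m)\simeq G[0]$, i.e.\ $\underline{\mathbf{Ext}}^{q}(G^{\vee},\mathbb{G}_m)=0$ for all $q\geq 1$, on both sites. Your justification (``finite locally free resolutions'') does not hold: a finite flat commutative group scheme such as $\mu_p$ or $\alpha_p$ does \emph{not}, as an abelian fppf sheaf, admit a finite resolution by finite free $\mathbb{Z}$-modules, so there is nothing of that shape to transport along $\epsilon^{*}$. What is actually available is only the case $q=1$ (classically due to Oort, as in Milne; the paper proves the log analogue as a separate lemma). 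Fortunately $q=1$ is all the first spectral sequence needs in total degree $1$, so your identification $\mathbb{H}^{1}\cong R^{1}f_{*}G_{X}$ survives. The problem is the second spectral sequence: you invoke ${}'E_{2}^{2,0}=\underline{\mathbf{Ext}}^{2}_{S}(G^{\vee},\mathbb{G}_{m,S})=0$, and that is not established.

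The paper does \emph{not} claim this vanishing. Instead it does precisely what you relegate to a parenthetical fallback: it uses the section $s$ to show that the edge map $\underline{\mathbf{Ext}}^{2}_{S}(G^{\vee},f_{*}\mathbb{G}_m)\to R^{2}F(\mathbb{G}_m)$ is injective (equivalently, that $d_{2}\colon{}'E_{2}^{0,1}\to{}'E_{2}^{2,0}$ vanishes), by factoring through $\underline{\mathbf{Ext}}^{2}_{S}(G^{\vee},s^{*}\mathbb{G}_m)$ via the natural transformation $f_{*}\to s^{*}$ and observing that $f_{*}\mathbb{G}_m\to s^{*}\mathbb{G}_m$ is an isomorphism. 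So promote your ``if one prefers\ldots'' remark to the main argument; that is the paper's proof. Relatedly, in the paper the hypothesis $\mathcal{O}_{S}\cong f_{*}\mathcal{O}_{X}$ is a standing assumption of the section, and the role of $s$ is this $d_{2}$-splitting, not the identification $R^{0}f_{*}\mathbb{G}_m=\mathbb{G}_{m,S}$.
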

To prove the above theorem we will need the log version of the following theorem.
\begin{prop}
(Prop III.4.16 \cite{milne1980etale})
For any scheme $X$ and any finite flat commutative group scheme $G$ over $X$, $\underline{\mathbf{Ext}^{1}}_{X_{\text{fl}}} (G, \mathbb{G}_m) = 0$.
\end{prop}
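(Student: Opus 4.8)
The plan is to prove the vanishing of the sheaf $\underline{\mathbf{Ext}}^{1}_{X_{\text{fl}}}(G,\mathbb{G}_m)$ by exhibiting, for every extension representing a local section, a faithfully flat cover of $X$ over which that extension splits. First I would reduce to the affine local situation: since $\underline{\mathbf{Ext}}^{1}_{X_{\text{fl}}}(G,\mathbb{G}_m)$ is the sheafification of $U\mapsto\operatorname{Ext}^1_{U_{\text{fl}}}(G|_U,\mathbb{G}_m)$, a section over $U$ is represented, after replacing $U$ by an fppf cover, by a genuine extension of fppf abelian sheaves $0\to\mathbb{G}_m\to E\to G\to 0$, and it suffices to show each such $E$ becomes the trivial extension after a further fppf base change. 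I may therefore assume $X=\spec R$ is affine and $G=\spec A$ with $A$ a finite locally free Hopf algebra. I would also record two structural inputs used throughout: $\underline{\mathbf{Hom}}_{X_{\text{fl}}}(G,\mathbb{G}_m)=G^{\vee}$ is the (finite flat) Cartier dual, and, by Deligne's theorem, $G$ is annihilated by its order $n$, so the whole sheaf $\underline{\mathbf{Ext}}^{1}_{X_{\text{fl}}}(G,\mathbb{G}_m)$ is $n$-torsion.

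The engine of the argument is the prototype computation for $G=\mathbb{Z}/n$. Applying $\underline{\mathbf{Hom}}_{X_{\text{fl}}}(-,\mathbb{G}_m)$ to the free presentation $0\to\mathbb{Z}\xrightarrow{\,n\,}\mathbb{Z}\to\mathbb{Z}/n\to 0$ and using that $\mathbb{Z}$ is acyclic (so $\underline{\mathbf{Ext}}^{1}(\mathbb{Z},\mathbb{G}_m)=0$), one gets $\underline{\mathbf{Ext}}^{1}_{X_{\text{fl}}}(\mathbb{Z}/n,\mathbb{G}_m)\cong\operatorname{coker}(\mathbb{G}_m\xrightarrow{\,n\,}\mathbb{G}_m)$. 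The crucial point is that this cokernel vanishes in the flat topology: the Kummer sequence $0\to\mu_n\to\mathbb{G}_m\xrightarrow{\,n\,}\mathbb{G}_m\to 0$ is exact as fppf sheaves, i.e.\ $n\colon\mathbb{G}_m\to\mathbb{G}_m$ is an fppf epimorphism, so $\mathbb{G}_m/n\mathbb{G}_m=0$ and hence $\underline{\mathbf{Ext}}^{1}_{X_{\text{fl}}}(\mathbb{Z}/n,\mathbb{G}_m)=0$. The same mechanism handles the remaining simple finite flat group schemes: for $\mu_n$ one uses Cartier duality together with $\underline{\mathbf{Ext}}^{1}_{X_{\text{fl}}}(\mathbb{G}_m,\mathbb{G}_m)=0$ and $\underline{\mathbf{Hom}}(\mathbb{G}_m,\mathbb{G}_m)=\mathbb{Z}$, and in characteristic $p$ one treats the infinitesimal pieces such as $\alpha_p$ through the Frobenius sequence $0\to\alpha_p\to\mathbb{G}_a\xrightarrow{\,F\,}\mathbb{G}_a\to 0$ and the corresponding vanishing for $\mathbb{G}_a$.

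To pass from the simple blocks to an arbitrary finite flat $G$, I would use dévissage. Working fppf-locally on $X$ (and, if necessary, after an étale cover splitting the connected--étale sequence), $G$ admits a finite filtration whose successive quotients are of the three types above; each short exact sequence $0\to G'\to G\to G''\to 0$ yields a long exact sequence in $\underline{\mathbf{Ext}}^{\bullet}_{X_{\text{fl}}}(-,\mathbb{G}_m)$, and the vanishing of $\underline{\mathbf{Ext}}^{1}$ for $G'$ and $G''$ propagates to $G$ once one knows the connecting maps are controlled, i.e.\ that $\underline{\mathbf{Hom}}(G',\mathbb{G}_m)\to\underline{\mathbf{Ext}}^{1}(G'',\mathbb{G}_m)$ lands in the expected range. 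Combining this with $n$-torsionness then gives $\underline{\mathbf{Ext}}^{1}_{X_{\text{fl}}}(G,\mathbb{G}_m)=0$.

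The hard part will be making the dévissage uniform over a general base $X$ rather than over a field: the decomposition into $\mathbb{Z}/n$, $\mu_n$, $\alpha_p$ pieces is fiberwise, so one must check that the filtration can be arranged fppf-locally on $X$ and that it is compatible with the connecting maps in the long exact Ext-sequences. The second delicate point is the infinitesimal case in characteristic $p$, where $\underline{\mathbf{Ext}}^{1}(\alpha_p,\mathbb{G}_m)=0$ forces one to control the higher sheaf-Ext groups $\underline{\mathbf{Ext}}^{2}_{X_{\text{fl}}}(\mathbb{G}_a,\mathbb{G}_m)$ along Frobenius. An alternative that sidesteps the structure theory is to prove directly that $\mathbb{G}_m$ is injective relative to the class of finite flat group schemes killed by $n$, which would reduce every case to the single identity $\mathbb{G}_m/n\mathbb{G}_m=0$ established above.
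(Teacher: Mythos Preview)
The paper does not give its own proof of this proposition; it is quoted as Prop.~III.4.16 from Milne's \emph{\'Etale Cohomology} and used as a black box for the log analogue proved immediately afterwards. There is therefore no argument in the paper to compare your attempt against, so I comment only on whether your outline actually establishes the result.

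Your computation for $G=\mathbb{Z}/n\mathbb{Z}$ via the free resolution and the Kummer sequence is correct and is the right prototype. The genuine gap is the d\'evissage step. A composition series with subquotients $\mathbb{Z}/p$, $\mu_p$, $\alpha_p$ exists for finite commutative group schemes over an algebraically closed field, but over a general base $X$ it does not, and it cannot be produced by passing to an fppf cover of $X$. Already over a ramified extension of $\mathbb{Z}_p$ the Oort--Tate classification gives finite flat group schemes of order $p$ that are neither \'etale, nor of multiplicative type, nor isomorphic to $\alpha_p$, and no fppf base change turns them into one of the three model types. The concern you raise in your final paragraph is thus not a technicality to be checked but an actual obstruction: the filtration you need is simply unavailable in the generality of the statement. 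Your proposed alternative, that $\mathbb{G}_m$ be injective relative to $n$-torsion finite flat group schemes, is a restatement of the conclusion rather than an independent route. There are also smaller issues in the $\mu_n$ and $\alpha_p$ sketches: the long exact sequences you write down leave $\underline{\mathbf{Ext}}^{1}(\mu_n,\mathbb{G}_m)$ and $\underline{\mathbf{Ext}}^{1}(\alpha_p,\mathbb{G}_m)$ embedded in $\underline{\mathbf{Ext}}^{2}(\mathbb{G}_m,\mathbb{G}_m)$ and $\underline{\mathbf{Ext}}^{2}(\mathbb{G}_a,\mathbb{G}_m)$, whose vanishing you have not argued.

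Milne's argument is uniform in $G$ and avoids any classification of simple pieces. One first represents the extension $0\to\mathbb{G}_m\to E\to G\to 0$ by a scheme (it is a $\mathbb{G}_m$-torsor over the finite $X$-scheme $G$); Zariski-locally on $X$ the ring of functions on $G$ is semilocal, so $\mathrm{Pic}(G)$ vanishes and the torsor acquires a scheme-theoretic section; the remaining obstruction is then a Hochschild $2$-cocycle $G\times_X G\to\mathbb{G}_m$, which is handled directly using that $G$ is killed by its order. If you want to repair your approach, that is the template to follow rather than d\'evissage.
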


The log version is as follows.

\begin{lem}
Let $X = (X,O_X,M_X)$ be a fine saturated log scheme and $G$ be a finite flat group scheme which we endow with a log structure from the inverse image of $M_X$. Then $\underline{\mathbf{Ext}^{1}}_{X^{\text{log}}_{\text{fl}}} (G, \mathbb{G}_m) = 0$.
\end{lem}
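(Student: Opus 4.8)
The plan is to reduce the log statement to the classical one (Prop.\ III.4.16 of \cite{milne1980etale}) by passing to a Kummer log flat cover on which the log structure becomes "invisible." The key point is that the log flat site has enough covers of Kummer type, and that after such a cover the relevant sheaves agree with their classical counterparts. Concretely, I would first recall that $\underline{\mathbf{Ext}^{1}}_{X^{\text{log}}_{\text{fl}}}(G,\mathbb{G}_m)$ is the sheaf associated to the presheaf $U \mapsto \operatorname{Ext}^{1}_{U^{\text{log}}_{\text{fl}}}(G|_U,\mathbb{G}_m)$, so it suffices to show that every extension of $G$ by $\mathbb{G}_m$ in the Kummer log flat topology splits after passing to a log flat cover of the base. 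Since $G$ is finite flat, any such extension $0 \to \mathbb{G}_m \to E \to G \to 0$ is classified by a $\mathbb{G}_m$-torsor on $G$ (together with a compatibility datum), i.e.\ by an element of $H^{1}_{\text{log fl}}(G, \mathbb{G}_m)$ satisfying a cocycle condition; here $G$ is given the log structure pulled back from $X$.

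The main work is then local on $X$, so I would choose, using the construction already set up in Section~3, an open cover $(U_\lambda)$ of $X$ together with charts $U_\lambda \to \mathbb{A}^{r(\lambda)}_S$ and the Kummer covers $U_{\lambda,n} = \mathbb{A}^{r(\lambda)}_S \times_{\mathbb{A}^{r(\lambda)}_S, T_i \mapsto T_i^n} U_\lambda$. Over $U_{\lambda,n}$, for $n$ divisible by the order of $G$ (or more precisely by enough to trivialize the log structure's contribution), the key comparison is that Kummer log flat cohomology of $\mathbb{G}_m$ on a log regular scheme agrees, after such a cover, with classical flat cohomology of $\mathbb{G}_m$ on the underlying scheme — this uses log regularity of $X$ (ensured by log-smoothness over $S$ and our hypotheses on $D$) so that $\mathbb{G}_m$-torsors in the log flat topology are controlled by the classical Picard group plus a contribution from $M^{\text{gp}}/\mathcal{O}^{*}$, and the latter is killed by multiplication by $n$. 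Thus on the cover $U_{\lambda,n}$ we may identify $\underline{\mathbf{Ext}^{1}}_{X^{\text{log}}_{\text{fl}}}(G,\mathbb{G}_m)|_{U_{\lambda,n}}$ with $\underline{\mathbf{Ext}^{1}}_{(U_{\lambda,n})_{\text{fl}}}(G,\mathbb{G}_m)$, and the classical Prop.\ III.4.16 gives that this vanishes. Since the $U_{\lambda,n}$ form a log flat cover of $X$ and $\underline{\mathbf{Ext}^{1}}$ is a sheaf, vanishing on a cover implies vanishing on $X$.

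The step I expect to be the main obstacle is the comparison between Kummer log flat $\mathbb{G}_m$-cohomology on $X$ and classical flat $\mathbb{G}_m$-cohomology on the underlying scheme of the cover $U_{\lambda,n}$ — i.e.\ making precise that passing to a sufficiently ramified Kummer cover "untwists" the log structure as far as $H^{1}(-,\mathbb{G}_m)$ is concerned. This requires either invoking Kato's structure results on Kummer log flat topology and log regular schemes, or giving a direct argument: locally the log structure is the one associated to a normal crossings divisor, a $\mathbb{G}_m$-torsor in the log flat topology pulls back from a classical $\mathbb{G}_m$-torsor together with a class in $\varinjlim \frac{1}{n}\mathbb{Z}^{r}/\mathbb{Z}^{r}$ recording ramification along the divisor, and the $n$-th Kummer cover multiplies this ramification class by $n$, hence kills it. Once this comparison is in hand, everything else is a formal descent argument reducing to the known classical vanishing; I would also need to check the compatibility (cocycle) datum descends, but since $\mathbb{G}_m$-cohomology of the relevant products $G, G\times_X G$ is handled by the same comparison, this is routine.
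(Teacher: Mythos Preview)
Your overall strategy --- reduce to the classical vanishing by passing to a log flat cover --- matches the paper's, but the mechanism you propose has a genuine gap. The central claim, that on the Kummer cover $U_{\lambda,n}$ one may identify $\underline{\mathbf{Ext}^{1}}_{X^{\text{log}}_{\text{fl}}}(G,\mathbb{G}_m)\big|_{U_{\lambda,n}}$ with the classical sheaf $\underline{\mathbf{Ext}^{1}}_{(U_{\lambda,n})_{\text{fl}}}(G,\mathbb{G}_m)$, is not correct. The log structure on $U_{\lambda,n}$ is still nontrivial (it is the inverse image of the divisorial log structure), so its Kummer log flat site is strictly larger than its classical flat site; concretely, $R^{1}\epsilon_{*}\mathbb{G}_m$ is still $\bigoplus_i (\mathbb{Q}/\mathbb{Z})_{D_i}$ on $U_{\lambda,n}$, and there remain log-flat $\mathbb{G}_m$-torsors that are not classical. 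What the $n$-th Kummer cover does is kill a \emph{fixed} ramification class lying in $\tfrac{1}{n}\mathbb{Z}^{r}/\mathbb{Z}^{r}$; it does not kill all of $\mathbb{Q}/\mathbb{Z}$, so no single cover makes the two $\underline{\mathbf{Ext}^1}$ sheaves coincide. You are conflating ``for each extension there is a cover on which it becomes classical'' (true, and enough for the lemma) with ``there is a cover on which every extension is classical'' (false, and what your identification asserts). A secondary issue is that your argument imports log regularity and the Section~3 constructions, while the lemma is stated for an arbitrary fs log scheme $X$; those hypotheses are not available here.

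The paper avoids both problems by working with one extension at a time and staying close to Milne's classical argument. Given an extension $0 \to \mathbb{G}_m \to E \to G_U \to 0$ on the log flat site of $U$, one first replaces $\mathbb{G}_m$ by $\mu_n$ (using that $nG=0$), obtaining $0 \to \mu_n \to E' \to G_U \to 0$. Since $E' \to G_U$ is an epimorphism of sheaves on the log flat site, the identity section $\mathrm{id}\in G_U(G_U)$ lifts over some log flat cover; this immediately produces a cover over which the extension splits (hence is classical), and the classical vanishing $\underline{\mathbf{Ext}^{1}}_{X_{\text{fl}}}(G,\mathbb{G}_m)=0$ finishes. The point is that the cover may depend on the extension --- which is exactly what ``the associated sheaf vanishes'' requires --- and no global comparison of sites, no explicit Kummer covers, and no log regularity are needed.
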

\begin{proof}
We have to show that for any $U$ in $X^{\log}_{\text{fl}}$ any element of     $\mathbf{Ext}^{1}_{U^{\text{log}}_{\text{fl}}}(G, \mathbb{G}_m)$ becomes trivial on some log flat cover of $U$. Every such extensions come from an exact sequence of sheaves
$$0 \to \mu_{n} \to E' \to G_{U} \to 0 $$ in log flat topology of $U$. Now $\text{id} \in G_{U}(G_{U})$. Then we can find a log flat covering $V \to N_{U}$ of $N_U$ where $\text{id} \in N_u(N_u)$ can be lifted to $E'(V)$. So, $E'$ splits  and it is classical torsor. Since, $\underline{\mathbf{Ext}^{1}}_{X_{\text{fl}}} (G, \mathbb{G}_m) = 0$ we can reduce it to the classical case.
\end{proof}
Consider a functor of sheaves $F$ on $X^{\text{log}}_{\text{fl}}$,
$$ H \mapsto F(H) = f_{*}{\underline{\mathbf{Hom}}(G^{\vee}_{X}, H) = \underline{\mathbf{Hom}}(G^{\vee},f_{*}(H))}$$.

Now take $H= \mathbb{G}_m$ and applying Grothendieck spectral sequence we get the following exact sequence

$$0 \to R^{1}f_{*}G_X \xrightarrow{\sim} R^{1}F(\mathbb{G}_m) \to f_{*}\underline{\mathbf{Ext}^{1}}_{X^{\text{log}}_{\text{fl}}}(G_X^{\vee}, \mathbb{G}_m) = 0.$$

This gives us the following exact sequence
 $$\underline{\mathbf{Ext}^{1}}(N^{\vee},\mathbb{G}_m) = 0 \to 
 R^1F(\mathbb{G}_m) \to {\underline{\mathbf{Hom}}}(N', 
 R^1f_{*} \mathbb{G}_m) \to 
 \underline{\mathbf{Ext}^{2}}(N^{\vee}, \mathbb{G}_m) \to R^2F(\mathbb{G}_m)$$
 
 We now need to prove that the map 
 $\underline{\mathbf{Ext}^{2}}(N^{\vee}, \mathbb{G}_m) \to R^2F(\mathbb{G}_m)$ is injective. Now we have a
 section  $s:S \to X$ to $f$. Therefore there is a morphism from 
 $f_{*}H \to s^{*}H$ which is functorial in $H$. Now this 
 morphism defines a map from $R^2F(H) \to 
 \underline{\mathbf{Ext}^{2}}(G^{\vee}, s^{*}H)$ whose 
 composition with $\underline{\mathbf{Ext}^{2}}(G^{\vee}, 
 f_{*}H) \to R^2F(H)$ is the map on 
 $\underline{\mathbf{Ext}^{2}}(G^{\vee},- )$ induced by $f_{*}H \to s^{*}H$. Now in our case the map $f_{*}\mathbb{G}_m \to s^{*}\mathbb{G}_m$ is an isomorphism so we get our desired result.
 
 \section{Abelian Fundamental Group Scheme}
 Let $X_{\text{fl}}$ denote the classical flat site on $X$ and $X^{\log}_{\text{fl}}$ denote the log flat site on $X$.
 
 Consider the following exact sequence on $X_{\text{fl}}$ site
 
 $$  0 \to \mathbb{G}_m \to M^{gp} \to M^{gp}/\mathbb{G}_m \to 0. $$.

 Similarly, consider the following exact sequence on $X^{\text{log}}_{\text{fl}}$ site
 
 $$  0 \to \mathbb{G}_m \to M^{gp} \to M^{gp}/\mathbb{G}_m \to 0 .$$.
 
 Now, let $f$ be the classical morphism  from $X$ to $S$ and $f^{\log}$ be the log morphism from $X$ to $S$. We can apply the functors   $f_{*}$  and $f^{\log}_{*}$ to both exact sequences above.
Note that   $f_{*}( M^{gp}/\mathbb{G}_m) =  \bigoplus_{1 \leq i \leq m} \mathbb{Z}_{D_i} $  where $D_i$ are the irreducible components of the divisor $D$. We also have $f^{\log}_{*}( M^{gp}/\mathbb{G}_m) =\bigoplus_{1 \leq i \leq m} \mathbb{Q}_{D_i} $. This gives us a map of the  complexes $$C_{1} = [\bigoplus_{1 \leq i \leq m} \mathbb{Z}_{D_i} \rightarrow \mathbf{Pic}_{X/S}]$$ and
 $$C_{2} = [\bigoplus_{1 \leq i \leq m} \mathbb{Q}_{D_i} \rightarrow \mathbf{logPic}_{X/S}]$$ on $S_{\text{fl}}$
 In the complex $C_1$, $\mathbf{Pic}_{X/S}$ is degree $0$ part and in the complex $C_2$, $\mathbf{logPic}_{X/S}$ is the degree $0$ part.
  Consider the following derived tensor product  $C_1 \otimes _ \mathbb{Z}^{\mathbf{L}} \mathbb{Z}/n\mathbb{Z}$ and  $C_2 \otimes _ \mathbb{Z}^{\mathbf{L}}  \mathbb{Z}/n\mathbb{Z}$.We have the following lemma
  
  \begin{lem}
  $H^{-1}(C_1 \otimes _ \mathbb{Z}^{\mathbf{L}} \mathbb{Z}/n\mathbb{Z}) \cong H^{-1}(C_2 \otimes _ \mathbb{Z}^{\mathbf{L}} \mathbb{Z}/n\mathbb{Z}).$
  \end{lem}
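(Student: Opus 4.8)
The plan is to reduce both sides to the triangle $K\xrightarrow{\ \times n\ }K\to K\otimes^{\mathbf{L}}_{\mathbb{Z}}\mathbb{Z}/n\mathbb{Z}\to K[1]$ and then to exploit that the degree $-1$ term $\bigoplus_i\mathbb{Q}_{D_i}$ of $C_2$ is a sheaf of $\mathbb{Q}$-vector spaces, so that $\times n$ is invertible on it and it dies after $\otimes^{\mathbf{L}}_{\mathbb{Z}}\mathbb{Z}/n\mathbb{Z}$, whereas the corresponding term $\bigoplus_i\mathbb{Z}_{D_i}$ of $C_1$ does not; the difference is made up, on the Picard side, by the passage from $\mathbf{Pic}_{X/S}$ to $\mathbf{logPic}_{X/S}$, which is governed by Kato's log flat Kummer sequence.

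First I would write down, for each of $C_1,C_2$, the short exact sequence of complexes on $S_{\text{fl}}$ exhibiting it as an extension of its shifted degree $-1$ term by its degree $0$ term: $0\to\mathbf{Pic}_{X/S}\to C_1\to\bigl(\bigoplus_i\mathbb{Z}_{D_i}\bigr)[1]\to 0$ with connecting map $\delta_1[1]$, and $0\to\mathbf{logPic}_{X/S}\to C_2\to\bigl(\bigoplus_i\mathbb{Q}_{D_i}\bigr)[1]\to 0$ with connecting map $\delta_2[1]$. Applying $-\otimes^{\mathbf{L}}_{\mathbb{Z}}\mathbb{Z}/n\mathbb{Z}$ to the second and using $\bigl(\bigoplus_i\mathbb{Q}_{D_i}\bigr)\otimes^{\mathbf{L}}_{\mathbb{Z}}\mathbb{Z}/n\mathbb{Z}=0$ gives $\mathbf{logPic}_{X/S}\otimes^{\mathbf{L}}_{\mathbb{Z}}\mathbb{Z}/n\mathbb{Z}\xrightarrow{\ \sim\ }C_2\otimes^{\mathbf{L}}_{\mathbb{Z}}\mathbb{Z}/n\mathbb{Z}$, hence $H^{-1}(C_2\otimes^{\mathbf{L}}_{\mathbb{Z}}\mathbb{Z}/n\mathbb{Z})\cong\mathbf{logPic}_{X/S}[n]$; and the Kummer sequence $0\to\mu_n\to\mathbb{G}_m\xrightarrow{\ n\ }\mathbb{G}_m\to 0$ on $X^{\log}_{\text{fl}}$, together with $f^{\log}_*\mathbb{G}_m=\mathbb{G}_m$ being $n$-divisible on $S_{\text{fl}}$, identifies this with $R^1f^{\log}_*\mu_n$. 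Doing the same with the first sequence, and using the flat Kummer sequence on $X_{\text{fl}}$ to compute $H^{-1}(\mathbf{Pic}_{X/S}\otimes^{\mathbf{L}}_{\mathbb{Z}}\mathbb{Z}/n\mathbb{Z})=\mathbf{Pic}_{X/S}[n]=R^1f_*\mu_n$ and $H^0=\mathbf{Pic}_{X/S}/n$, one obtains
$$0\to R^1f_*\mu_n\to H^{-1}(C_1\otimes^{\mathbf{L}}_{\mathbb{Z}}\mathbb{Z}/n\mathbb{Z})\to\operatorname{ker}\bigl(\bar\delta_1\colon\bigoplus_i(\mathbb{Z}/n\mathbb{Z})_{D_i}\to\mathbf{Pic}_{X/S}/n\bigr)\to 0,$$
where $\bar\delta_1$ is $\delta_1$ reduced mod $n$.

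It then remains to show that $R^1f^{\log}_*\mu_n$ is precisely this extension. For this I would invoke the log flat Kummer exact sequence comparing log flat and flat $\mu_n$-cohomology (Kato, Nakayama), which over $S$ reads
$$0\to R^1f_*\mu_n\to R^1f^{\log}_*\mu_n\to f_*\bigl((M^{gp}/\mathbb{G}_m)\otimes\mathbb{Z}/n\mathbb{Z}\bigr)\xrightarrow{\ \partial\ }R^2f_*\mu_n$$
(up to a Tate twist on the third term), where $f_*\bigl((M^{gp}/\mathbb{G}_m)\otimes\mathbb{Z}/n\mathbb{Z}\bigr)=\bigoplus_i(\mathbb{Z}/n\mathbb{Z})_{D_i}$ because $M^{gp}/\mathbb{G}_m$ is branch-by-branch torsion-free. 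The crucial point is the identification of $\partial$: it is the Bockstein of $0\to\mathbb{G}_m\to M^{gp}\to M^{gp}/\mathbb{G}_m\to 0$, i.e.\ the composite $\bigoplus_i(\mathbb{Z}/n\mathbb{Z})_{D_i}\xrightarrow{\bar\delta_1}\mathbf{Pic}_{X/S}/n\hookrightarrow R^2f_*\mu_n$, the last arrow being the injection coming from $0\to\mathbf{Pic}_{X/S}/n\to R^2f_*\mu_n\to(R^2f_*\mathbb{G}_m)[n]\to 0$. Granting this, $\operatorname{ker}\partial=\operatorname{ker}\bar\delta_1$, so $R^1f^{\log}_*\mu_n$ is an extension of $\operatorname{ker}\bar\delta_1$ by $R^1f_*\mu_n$; one checks, using naturality in the morphism $C_1\to C_2$, that these two extensions of $\operatorname{ker}\bar\delta_1$ by $R^1f_*\mu_n$ coincide, which gives $H^{-1}(C_1\otimes^{\mathbf{L}}_{\mathbb{Z}}\mathbb{Z}/n\mathbb{Z})\cong\mathbf{logPic}_{X/S}[n]\cong H^{-1}(C_2\otimes^{\mathbf{L}}_{\mathbb{Z}}\mathbb{Z}/n\mathbb{Z})$.

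The main obstacle is exactly this identification of the two connecting homomorphisms. It comes down to unwinding the construction of the log flat Kummer sequence on $X^{\log}_{\text{fl}}$ — which rests on the Kummer sequences for $\mathbb{G}_m$ and for $M^{gp}$ and on $M^{gp}\xrightarrow{\ n\ }M^{gp}$ being an epimorphism in the Kummer log flat topology, so that $M^{gp}\otimes^{\mathbf{L}}_{\mathbb{Z}}\mathbb{Z}/n\mathbb{Z}\simeq\mu_n[1]$ there ($M^{gp}/\mathbb{G}_m$ being uniquely $n$-divisible) — and matching the resulting Bockstein with that of $0\to\mathbb{G}_m\to M^{gp}\to M^{gp}/\mathbb{G}_m\to 0$. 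Equivalently, one can avoid citing Kato's sequence and instead show directly that the cone of $C_1\to C_2$ becomes acyclic in degrees $\le 0$ after $\otimes^{\mathbf{L}}_{\mathbb{Z}}\mathbb{Z}/n\mathbb{Z}$; but, expressed through the sheaves $\operatorname{ker}(\mathbf{Pic}_{X/S}\to\mathbf{logPic}_{X/S})$, $\operatorname{coker}(\mathbf{Pic}_{X/S}\to\mathbf{logPic}_{X/S})$ and $\bigoplus_i(\mathbb{Q}/\mathbb{Z})_{D_i}$, this acyclicity unwinds to the same computation.
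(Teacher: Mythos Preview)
Your computation is correct, but it takes a longer road than the paper does. You work mod $n$ from the start, compute $H^{-1}(C_2\otimes^{\mathbf{L}}\mathbb{Z}/n)\cong\mathbf{logPic}_{X/S}[n]$ and $H^{-1}(C_1\otimes^{\mathbf{L}}\mathbb{Z}/n)$ as an extension of $\ker\bar\delta_1$ by $\mathbf{Pic}_{X/S}[n]$, and then match them via the log flat Kummer sequence for $\mu_n$ --- which forces you into the delicate identification of the boundary map $\partial$ you flag as the main obstacle. The paper instead proves the stronger statement that $C_1\to C_2$ is already a quasi-isomorphism \emph{before} tensoring with $\mathbb{Z}/n\mathbb{Z}$: since both degreewise maps are injective, it suffices that the quotient complex $[\bigoplus_i(\mathbb{Q}/\mathbb{Z})_{D_i}\to\mathbf{logPic}_{X/S}/\mathbf{Pic}_{X/S}]$ be acyclic, i.e.\ that $\mathbf{logPic}_{X/S}/\mathbf{Pic}_{X/S}\cong\bigoplus_i(\mathbb{Q}/\mathbb{Z})_{D_i}$. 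This follows from Kato's computation $R^1\epsilon_*\mathbb{G}_m\cong\bigoplus_i(\mathbb{Q}/\mathbb{Z})_{D_i}$ (so Leray for $\epsilon$ gives an injection one way) together with the obvious section coming from the differential of $C_2$. Once $C_1\simeq C_2$ in the derived category, the isomorphism on $H^{-1}(-\otimes^{\mathbf{L}}\mathbb{Z}/n)$ is automatic and no comparison of connecting maps is needed. Your closing remark about the cone of $C_1\to C_2$ is exactly this idea, but you undersell it: working with $\mathbb{G}_m$ rather than $\mu_n$ makes the argument genuinely shorter, not merely a repackaging of the same computation.
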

  
  \begin{proof} Note that there is an exact sequence of the form 
  $0 \to C_1 \to C_2.$ To prove the above the lemma it is enough to show that
  $\mathbf{logPic}_{X/S}/ \mathbf{Pic}_{X/S} = \bigoplus_{1 \leq i \leq m} \mathbb{Q}_{D_i}/\mathbb{Z}_{D_i}$. Let $\epsilon$ be the natural map of sites from $X^{\log}_{\text{fl}}$ to $X_{\text{fl}}$ induced by the functor that takes a classical scheme $Y$
over X and produces a log scheme with the underlying scheme  same as $Y$ and the log structure is induced by its structure
morphism. Now we have $f^{\log}_{*}= f_{*} \epsilon_{*}$. Using this we get the following exact sequence
$$0 \to  \mathbf{Pic}_{X/S} \to  \mathbf{logPic}_{X/S} \to  f_{*} (R^{1}\epsilon_{*}(\mathbb{G}_m) \to R^2f_{*}(\epsilon_{*}(\mathbb{G}_m)) .$$

Using Theorem 4.1 in \cite{kato2019logarithmic}, we get that $R^{1}\epsilon_{*}(\mathbb{G}_m)$ is $\bigoplus_{1 \leq i \leq m} \mathbb{Q}_{D_i}/\mathbb{Z}_{D_i}$. Therefore, we have an injection from $\mathbf{logPic}_{X/S}/ \mathbf{Pic}_{X/S} \to \mathbb{Q}_{D_i}/\mathbb{Z}_{D_i}$. On the other hand, from the complexes the $C_1$ and $C_2$ we get a map from $  \mathbb{Q}_{D_i}/\mathbb{Z}_{D_i} \to \mathbf{logPic}_{X/S}/ \mathbf{Pic}_{X/S}$. Composing the two above maps we get the identity map.
  \end{proof}
 
 Now, consider the following double complex

 \begin{tikzcd}
 (\bigoplus_{1 \leq i \leq m} \mathbb{Q}_{D_i}) \arrow[r] \arrow[d, "n" ] & \mathbf{logPic}_{X/S} \arrow[d, "n" ] \\
(\bigoplus_{1 \leq i \leq m} \mathbb{Q}_{D_i}) \arrow[r] &  \mathbf{logPic}_{X/S}\\
 \end{tikzcd}.

 The $H^{-1}$ of the total complex associated with the above double complex is equal to $H^{-1}(C_2 \otimes _ \mathbb{Z}^{\mathbf{L}} \mathbb{Z}/n\mathbb{Z})$ which is $\mathbf{logPic}_{X/S}][n]$. Then we get the following
 
 \begin{lem}
 $$H^{-1}(C_1 \otimes _ \mathbb{Z}^{\mathbf{L}} \mathbb{Z}/n\mathbb{Z}) \cong \mathbf{logPic}_{X/S}[n]$$
 \end{lem}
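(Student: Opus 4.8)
The plan is to prove the statement in two moves: first replace the complex $C_1$ by $C_2$, using the previous lemma, and then carry out an explicit (but essentially formal) computation with the displayed double complex.

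First I would observe that the natural map of complexes $C_1 \to C_2$ — given in degree $-1$ by the inclusion $\bigoplus_i \mathbb{Z}_{D_i}\hookrightarrow\bigoplus_i \mathbb{Q}_{D_i}$ and in degree $0$ by the inclusion $\mathbf{Pic}_{X/S}\hookrightarrow\mathbf{logPic}_{X/S}$ (the latter coming from the exact sequence in the proof of the previous lemma) — is a quasi-isomorphism. Indeed it fits into a short exact sequence of complexes $0\to C_1\to C_2\to C_2/C_1\to 0$ with $C_2/C_1 = \big[\bigoplus_i \mathbb{Q}_{D_i}/\mathbb{Z}_{D_i}\xrightarrow{\ \bar\phi\ }\mathbf{logPic}_{X/S}/\mathbf{Pic}_{X/S}\big]$, and the previous lemma (more precisely, the computation in its proof) identifies $\mathbf{logPic}_{X/S}/\mathbf{Pic}_{X/S}$ with $\bigoplus_i \mathbb{Q}_{D_i}/\mathbb{Z}_{D_i}$ in such a way that $\bar\phi$ becomes the identity; hence $C_2/C_1$ is acyclic. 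Since $-\otimes_{\mathbb{Z}}^{\mathbf{L}}\mathbb{Z}/n\mathbb{Z}$ preserves quasi-isomorphisms, this gives $C_1\otimes_{\mathbb{Z}}^{\mathbf{L}}\mathbb{Z}/n\mathbb{Z}\simeq C_2\otimes_{\mathbb{Z}}^{\mathbf{L}}\mathbb{Z}/n\mathbb{Z}$, so in particular $H^{-1}(C_1\otimes_{\mathbb{Z}}^{\mathbf{L}}\mathbb{Z}/n\mathbb{Z})\cong H^{-1}(C_2\otimes_{\mathbb{Z}}^{\mathbf{L}}\mathbb{Z}/n\mathbb{Z})$ — which is exactly what the preceding lemma records, so one may simply quote it here rather than redo this.

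It then remains to show $H^{-1}(C_2\otimes_{\mathbb{Z}}^{\mathbf{L}}\mathbb{Z}/n\mathbb{Z})\cong\mathbf{logPic}_{X/S}[n]$. Since $[\mathbb{Z}\xrightarrow{n}\mathbb{Z}]$ is a free resolution of $\mathbb{Z}/n\mathbb{Z}$, the derived tensor product is computed by the total complex $T^\bullet$ of the displayed double complex $\big(C_2\xrightarrow{\ n\ }C_2\big)$, concentrated in degrees $-2,-1,0$ with $T^{-2}=\bigoplus_i\mathbb{Q}_{D_i}$, $T^{-1}=\mathbf{logPic}_{X/S}\oplus\bigoplus_i\mathbb{Q}_{D_i}$, $T^0=\mathbf{logPic}_{X/S}$. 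The structural input is that $\bigoplus_i\mathbb{Q}_{D_i}$ is a sheaf of $\mathbb{Q}$-vector spaces on $S_{\mathrm{fl}}$, so multiplication by $n$ is an automorphism of it. Filtering $T^\bullet$ by the columns of the double complex yields a short exact sequence of complexes $0\to\big[\mathbf{logPic}_{X/S}\xrightarrow{n}\mathbf{logPic}_{X/S}\big]\to T^\bullet\to\big[\bigoplus_i\mathbb{Q}_{D_i}\xrightarrow{n}\bigoplus_i\mathbb{Q}_{D_i}\big]\to 0$, where the subcomplex sits in degrees $-1,0$ and the quotient in degrees $-2,-1$; the quotient is acyclic because $n$ is invertible on $\bigoplus_i\mathbb{Q}_{D_i}$, so $T^\bullet$ is quasi-isomorphic to $\big[\mathbf{logPic}_{X/S}\xrightarrow{n}\mathbf{logPic}_{X/S}\big]$ in degrees $-1,0$. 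Hence $H^{-1}(C_2\otimes_{\mathbb{Z}}^{\mathbf{L}}\mathbb{Z}/n\mathbb{Z})=H^{-1}(T^\bullet)=\ker\big(n\colon\mathbf{logPic}_{X/S}\to\mathbf{logPic}_{X/S}\big)=\mathbf{logPic}_{X/S}[n]$, and combining with the reduction above completes the argument.

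The computation is essentially formal once the earlier lemma is available, so I do not anticipate a genuine obstruction. The two points that require a little care are: (i) upgrading the earlier lemma from an isomorphism on $H^{-1}$ to the statement that $C_1\to C_2$ is a quasi-isomorphism, i.e. that $\bar\phi$ is an isomorphism and the quotient complex $C_2/C_1$ is acyclic — though, as noted, quoting the lemma directly sidesteps this; and (ii) the sheaf-theoretic assertion that multiplication by $n$ is invertible on $\bigoplus_i\mathbb{Q}_{D_i}$, which is immediate since this sheaf is (a direct image of) a constant $\mathbb{Q}$-sheaf. The remainder is bookkeeping with the column filtration — equivalently, the spectral sequence of the double complex — which degenerates after the first page.
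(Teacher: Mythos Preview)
Your proposal is correct and follows essentially the same route as the paper: reduce from $C_1$ to $C_2$ via the preceding lemma, then compute $H^{-1}(C_2\otimes_{\mathbb{Z}}^{\mathbf{L}}\mathbb{Z}/n\mathbb{Z})$ as the $H^{-1}$ of the total complex of the displayed double complex, exploiting that multiplication by $n$ is invertible on $\bigoplus_i\mathbb{Q}_{D_i}$. Your write-up is in fact more explicit than the paper's, which simply asserts that this $H^{-1}$ equals $\mathbf{logPic}_{X/S}[n]$ without spelling out the column-filtration (or spectral-sequence) step you describe.
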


Using Theorem \ref{picisomorphism} for any finite flat group scheme we have an isomorphism 
$\omega : H^{1}_{\text{log fl}}(X,G)/H^1_{\text{fl}}(S,G) \xrightarrow[]{\sim} Hom_{S}(G^{\vee}, \mathbf{logPic}_{X/S} )$.

Let $H^{1}_{\bullet}(X,G) = H^{1}_{\text{log fl}}(X,G)/H^1(S,G)$ then we get the following

$$H^{1}_{\bullet}(X,G) \cong Hom_{S}(G^{\vee}, \mathbf{logPic}_{X/S} ).$$

Let $n$ be such that $nG=0$, then we get

$$H^{1}_{\bullet}(X,G) \cong Hom_{S}(G^{\vee}, \mathbf{logPic}_{X/S}[n] ) \cong Hom_{S}( (\mathbf{logPic}_{X/S}[n])^{\vee},G ).$$

Now from the definition of log Nori fundamental group we have

$$H^{1}_{\bullet}(X,G) \cong Hom ((\pi^\text{log}_{\text{Nori}}(X,x))^{\text{ab}},G)$$

Combining the last two equations we get.

\begin{prop}
\label{fundamental_pic}
\begin{equation*}
(\pi^\text{log}_{\text{Nori}}(X,x))^{\text{ab}} \cong \underset{n \in \mathbb{N}}{\varprojlim} (\mathbf{logPic}_{X/S}[n])^{\vee}    
\end{equation*}
\end{prop}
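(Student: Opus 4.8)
The proof is essentially the assembly of the chain of isomorphisms displayed just above the statement, made functorial and then passed to a limit. The plan is as follows. First, fix a positive integer $n$ and work inside the category of finite flat commutative group schemes $G$ over $S$ that are annihilated by $n$. On this category the functor $G \mapsto H^1_{\bullet}(X,G)$ is, by Proposition \ref{picisomorphism} together with the identification $\mathbf{logPic}_{X/S}[n] \cong H^{-1}(C_1 \otimes^{\mathbf L}_{\mathbb Z} \mathbb Z/n\mathbb Z)$ and Cartier duality, naturally isomorphic to $G \mapsto \mathrm{Hom}_S\big((\mathbf{logPic}_{X/S}[n])^{\vee},\,G\big)$; and it is also, by the defining universal property of $\pi^{\text{log}}_{\text{Nori}}(X,x)$ and the fact that morphisms to a commutative finite flat $G$ factor through the abelianization, naturally isomorphic to $G \mapsto \mathrm{Hom}\big((\pi^{\text{log}}_{\text{Nori}}(X,x))^{\text{ab}},\,G\big)$. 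I would check explicitly that both identifications are functorial in $G$: the first because the isomorphism of Proposition \ref{picisomorphism} comes from a morphism of sheaves and Cartier duality is a contravariant equivalence on finite flat group schemes, the second because it is tautological from the universal torsor $(\pi^{\text{log}}_{\text{Nori}}(X,x),T,\tau)$.

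Next I would observe that, since $G$ is killed by $n$, the functor $\mathrm{Hom}((\pi^{\text{log}}_{\text{Nori}}(X,x))^{\text{ab}},-)$ only sees the quotient $(\pi^{\text{log}}_{\text{Nori}}(X,x))^{\text{ab}}/n$, which is a finite flat commutative group scheme over $S$ annihilated by $n$ (here one uses that a profinite flat group scheme has finite flat layers and that abelianization is formed layerwise). Applying the Yoneda lemma inside this category then yields a canonical isomorphism of group schemes
$$(\pi^{\text{log}}_{\text{Nori}}(X,x))^{\text{ab}}/n \;\xrightarrow{\ \sim\ }\; (\mathbf{logPic}_{X/S}[n])^{\vee}.$$
Letting $n$ range over the positive integers ordered by divisibility, I would verify that these isomorphisms are compatible with the transition maps on both sides — the quotient surjections $(\cdot)/m \twoheadrightarrow (\cdot)/n$ for $n \mid m$ on the left, and the duals of the inclusions $\mathbf{logPic}_{X/S}[n] \hookrightarrow \mathbf{logPic}_{X/S}[m]$ on the right — which again reduces to the naturality established in the first step, now in the variable $n$. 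Passing to the projective limit, and using that $(\pi^{\text{log}}_{\text{Nori}}(X,x))^{\text{ab}} = \varprojlim_n (\pi^{\text{log}}_{\text{Nori}}(X,x))^{\text{ab}}/n$ because every finite quotient of a profinite flat group scheme is annihilated by some integer, gives the asserted isomorphism $(\pi^{\text{log}}_{\text{Nori}}(X,x))^{\text{ab}} \cong \varprojlim_n (\mathbf{logPic}_{X/S}[n])^{\vee}$.

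The main obstacle I anticipate is not conceptual but organizational: ensuring that the various isomorphisms are genuinely natural — in $G$ at each fixed $n$, and then in $n$ — so that the Yoneda identifications glue to an isomorphism of the two projective systems, and confirming that $(\pi^{\text{log}}_{\text{Nori}}(X,x))^{\text{ab}}$ is exhausted by its $n$-torsion quotients. I would also be careful about the point that the abelianization of a profinite flat group scheme should be taken as the inverse limit of the abelianizations of its finite layers, so that the equality $\mathrm{Hom}((\pi^{\text{log}}_{\text{Nori}}(X,x))^{\text{ab}},G) = \mathrm{Hom}(\pi^{\text{log}}_{\text{Nori}}(X,x),G)$ for commutative finite flat $G$, which was used implicitly above, really does hold. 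Finally one should note that $(\mathbf{logPic}_{X/S}[n])^{\vee}$ is indeed a finite flat group scheme over $S$, so that Cartier duality and the Yoneda argument apply in the stated category.
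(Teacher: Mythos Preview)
Your proposal is correct and follows essentially the same approach as the paper: the paper simply records the chain of isomorphisms $H^{1}_{\bullet}(X,G) \cong \mathrm{Hom}_S\big((\mathbf{logPic}_{X/S}[n])^{\vee},G\big)$ and $H^{1}_{\bullet}(X,G) \cong \mathrm{Hom}\big((\pi^{\text{log}}_{\text{Nori}}(X,x))^{\text{ab}},G\big)$ and declares the proposition as their combination, whereas you make explicit the Yoneda step, the naturality in $G$ and in $n$, and the passage to the projective limit that the paper leaves implicit. The only superfluous ingredient in your outline is the reference to $H^{-1}(C_1 \otimes^{\mathbf L}_{\mathbb Z} \mathbb Z/n\mathbb Z)$, which is not needed here since the restriction to $\mathbf{logPic}_{X/S}[n]$ follows directly from $nG=0$.
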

We also need the following lemma

\begin{lem}
\label{torsionses}
$(H^{-1}(C_0 \otimes _ \mathbb{Z}^{\mathbf{L}} \mathbb{Z}/n\mathbb{Z}) )^{\vee}  = \mathbf{Alb}_{X/S,D}[n]$
\end{lem}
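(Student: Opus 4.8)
The plan is to dualize the defining complex $C_0 = [(\bigoplus_i \mathbb{Z}_{D_i})^0 \to \mathbf{Pic}^0_{X/S}]$ and compare the derived $\mathbf{Hom}(-, \mathbb{G}_m)$ of its mod-$n$ reduction with the mod-$n$ points of $\mathbf{Alb}_{X/S,D}$. Recall that $\mathbf{Alb}_{X/S,D}$ is \emph{defined} as the scheme representing $\underline{\mathbf{Ext}}^1(C_0, \mathbb{G}_m)$, and that the degree-$0$ part $(\bigoplus_i \mathbb{Z}_{D_i})^0$ is a (locally constant) free $\mathbb{Z}$-module sheaf, so $\underline{\mathbf{Hom}}((\bigoplus_i \mathbb{Z}_{D_i})^0, \mathbb{G}_m)$ is a torus and $\underline{\mathbf{Ext}}^{\ge 1}((\bigoplus_i \mathbb{Z}_{D_i})^0, \mathbb{G}_m) = 0$. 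Likewise $\underline{\mathbf{Ext}}^1(\mathbf{Pic}^0_{X/S}, \mathbb{G}_m) = \mathbf{Alb}_{X/S}$ by definition, and $\underline{\mathbf{Hom}}(\mathbf{Pic}^0_{X/S}, \mathbb{G}_m) = 0$ since $\mathbf{Pic}^0_{X/S}$ has no nontrivial characters (it is connected with no global units beyond constants). These vanishings already give the short exact sequence $0 \to \underline{\mathbf{Hom}}((\bigoplus_i \mathbb{Z}_{D_i})^0, \mathbb{G}_m) \to \mathbf{Alb}_{X/S,D} \to \mathbf{Alb}_{X/S} \to 0$ quoted in the introduction, which I will take as the structural input.

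First I would compute $H^{-1}(C_0 \otimes^{\mathbf{L}}_{\mathbb{Z}} \mathbb{Z}/n\mathbb{Z})$. Writing $C_0$ as a two-term complex with $(\bigoplus_i \mathbb{Z}_{D_i})^0$ in degree $-1$ and $\mathbf{Pic}^0_{X/S}$ in degree $0$, the object $C_0 \otimes^{\mathbf{L}} \mathbb{Z}/n$ is computed by the total complex of the evident $2\times 2$ square with vertical maps multiplication by $n$ (exactly as done for $C_2$ in the preceding lemmas). Its $H^{-1}$ is the kernel of the induced map, which sits in an exact sequence $0 \to \big((\bigoplus_i \mathbb{Z}_{D_i})^0 / n\big) \to H^{-1}(C_0 \otimes^{\mathbf{L}} \mathbb{Z}/n) \to (\text{something inside } \mathbf{Pic}^0_{X/S}[n]) \to 0$; more precisely, since $(\bigoplus_i \mathbb{Z}_{D_i})^0$ is torsion-free, $H^{-1}(C_0 \otimes^{\mathbf{L}} \mathbb{Z}/n)$ is the mapping-fiber $[n]$-torsion, i.e. $H^{-1}(C_0 \otimes^{\mathbf{L}} \mathbb{Z}/n) = \mathbb{H}^{-1}$ of the cone shifted, which I will unwind as the homology of $C_0$ "with $\mathbb{Z}/n$ coefficients" in the lowest degree.

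Next I would dualize. Apply $R\underline{\mathbf{Hom}}(-, \mathbb{G}_m)$ to the distinguished triangle $(\bigoplus_i \mathbb{Z}_{D_i})^0[1] \to C_0 \to \mathbf{Pic}^0_{X/S} \to$, or better, apply it directly to $C_0 \otimes^{\mathbf{L}} \mathbb{Z}/n$, using $R\underline{\mathbf{Hom}}(C_0 \otimes^{\mathbf{L}} \mathbb{Z}/n, \mathbb{G}_m) \simeq R\underline{\mathbf{Hom}}(C_0, \mathbb{G}_m) \otimes^{\mathbf{L}} \mathbb{Z}/n$ (valid because $\mathbb{Z}/n$ is a perfect complex of $\mathbb{Z}$-modules). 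Feeding in the vanishings above, $R\underline{\mathbf{Hom}}(C_0, \mathbb{G}_m)$ has cohomology only in degrees $0$ and $1$: $\underline{\mathbf{Hom}}(C_0,\mathbb{G}_m)$ in degree $0$ and $\underline{\mathbf{Ext}}^1(C_0,\mathbb{G}_m) = \mathbf{Alb}_{X/S,D}$ in degree $1$ (with possible contribution of $\underline{\mathbf{Ext}}^2$, which I need to check is killed, e.g. via the section $s \colon S \to X$ exactly as in the argument following Proposition~\ref{picisomorphism}). Then $H^0$ of $R\underline{\mathbf{Hom}}(C_0,\mathbb{G}_m)\otimes^{\mathbf{L}}\mathbb{Z}/n$ involves $\mathbf{Alb}_{X/S,D}[n]$ together with $\underline{\mathbf{Hom}}(C_0,\mathbb{G}_m)/n$, and I would identify $(H^{-1}(C_0 \otimes^{\mathbf{L}} \mathbb{Z}/n))^\vee$ with precisely the $\mathbf{Alb}_{X/S,D}[n]$-piece by a careful bookkeeping of which graded piece of the universal-coefficient-type exact sequence is the Cartier dual of which. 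The cleanest route is: the pairing $C_0 \otimes^{\mathbf{L}} R\underline{\mathbf{Hom}}(C_0, \mathbb{G}_m) \to \mathbb{G}_m[1]$ induces, after $\otimes^{\mathbf{L}}\mathbb{Z}/n$ and taking the appropriate cohomology, a perfect pairing between $H^{-1}(C_0 \otimes^{\mathbf{L}} \mathbb{Z}/n)$ and $\mathbf{Alb}_{X/S,D}[n]$ landing in $\mu_n$, which gives the asserted Cartier duality.

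The main obstacle is the bookkeeping across the two universal-coefficient exact sequences — making sure that the torus part $\underline{\mathbf{Hom}}((\bigoplus_i \mathbb{Z}_{D_i})^0,\mathbb{G}_m)$ and the abelian-scheme part $\mathbf{Alb}_{X/S}$ match up correctly under Cartier duality with, respectively, the $(\bigoplus_i \mathbb{Z}_{D_i})^0/n$ and $\mathbf{Pic}^0_{X/S}[n]$ contributions to $H^{-1}(C_0 \otimes^{\mathbf{L}} \mathbb{Z}/n)$, and that the connecting maps in the two sequences are genuinely dual. A secondary technical point is the vanishing of the relevant $\underline{\mathbf{Ext}}^2$ term, which should follow from the existence of the section $x \colon S \to X$ (disjoint from $D$), by the same device as in Section~4: the section splits $f_* \to s^*$ so the edge map on $\underline{\mathbf{Ext}}^2$ is injective. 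Once both are in hand, the identification $(H^{-1}(C_0 \otimes^{\mathbf{L}}_{\mathbb{Z}} \mathbb{Z}/n\mathbb{Z}))^\vee = \mathbf{Alb}_{X/S,D}[n]$ drops out, and this is what feeds into assembling the main exact sequence of Theorem~\ref{mainthm}.
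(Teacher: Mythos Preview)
The paper states this lemma without proof, so there is no argument to compare your proposal against; your sketch is in fact supplying what the paper omits. The strategy you outline --- interpret $C_0=[L\to \mathbf{Pic}^0_{X/S}]$ as a $1$-motive, identify $R\underline{\mathbf{Hom}}(C_0,\mathbb{G}_m)$ with the dual semi-abelian scheme $\mathbf{Alb}_{X/S,D}$ placed in a single degree, and then read off the Cartier pairing between $H^{-1}(C_0\otimes^{\mathbf L}\mathbb{Z}/n)$ and $\mathbf{Alb}_{X/S,D}[n]$ --- is the standard and correct route (this is Deligne's duality for $1$-motives, essentially the pairing $T_{\mathbb{Z}/n}(M)\times T_{\mathbb{Z}/n}(M^{\ast})\to\mu_n$).

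Two small corrections that would tighten the write-up. First, the identity you invoke should read
\[
R\underline{\mathbf{Hom}}\bigl(C_0\otimes^{\mathbf L}\mathbb{Z}/n,\;\mathbb{G}_m\bigr)\;\simeq\;R\underline{\mathbf{Hom}}(C_0,\mathbb{G}_m)\otimes^{\mathbf L}\mathbb{Z}/n[-1],
\]
because $R\underline{\mathbf{Hom}}(\mathbb{Z}/n,\mathbb{Z})=\mathbb{Z}/n[-1]$; keeping track of this shift is exactly what makes your ``bookkeeping'' line up. Second, $\underline{\mathbf{Hom}}(C_0,\mathbb{G}_m)$ is actually $0$ (not merely controlled): an abelian scheme has no characters and a lattice in degree $-1$ contributes only from degree~$1$ onward, so $R\underline{\mathbf{Hom}}(C_0,\mathbb{G}_m)$ is concentrated in degree~$1$ and equals $\mathbf{Alb}_{X/S,D}$ there. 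This also disposes of your $\underline{\mathbf{Ext}}^2$ worry directly: $\underline{\mathbf{Ext}}^{\ge 1}(L,\mathbb{G}_m)=0$ for a lattice $L$, and $\underline{\mathbf{Ext}}^{\ge 2}(\mathbf{Pic}^0_{X/S},\mathbb{G}_m)=0$ for an abelian scheme (Breen), so no appeal to the section $x$ as in Section~4 is needed here. With these adjustments, multiplication by $n$ being an fppf epimorphism on $\mathbf{Alb}_{X/S,D}$ (a semi-abelian scheme) gives $\mathbf{Alb}_{X/S,D}\otimes^{\mathbf L}\mathbb{Z}/n\simeq \mathbf{Alb}_{X/S,D}[n][1]$, and the asserted Cartier duality follows immediately.
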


\begin{lem}
We have an exact sequence of the form

$$0 \to H^{-1}(C_0 \otimes _ \mathbb{Z}^{\mathbf{L}} \mathbb{Z}/n\mathbb{Z}) \to  H^{-1}(C_1 \otimes _ \mathbb{Z}^{\mathbf{L}} \mathbb{Z}/n\mathbb{Z}) \to H^{-1}(\mathbf{NS}^{\tau}_{X/S,D} \otimes  _ \mathbb{Z}^{\mathbf{L}} \mathbb{Z}/n\mathbb{Z})\to 0$$
\end{lem}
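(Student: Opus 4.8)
The proof I have in mind rests on a single short exact sequence of two–term complexes of fppf sheaves on $S$, together with the long exact cohomology sequence obtained by deriving $-\otimes^{\mathbf{L}}_{\mathbb{Z}}\mathbb{Z}/n\mathbb{Z}$. First I would set it up. By definition $(\bigoplus_{i}\mathbb{Z}_{D_i})^{0}=\operatorname{ker}[\bigoplus_{i}\mathbb{Z}_{D_i}\to\mathbf{NS}_{X/S}]$, so the composite $(\bigoplus_{i}\mathbb{Z}_{D_i})^{0}\hookrightarrow\bigoplus_{i}\mathbb{Z}_{D_i}\to\mathbf{Pic}_{X/S}$ factors through $\mathbf{Pic}^{0}_{X/S}=\operatorname{ker}[\mathbf{Pic}_{X/S}\to\mathbf{NS}_{X/S}]$; hence the evident termwise injections assemble to a map of complexes $C_0\hookrightarrow C_1$, and I let $Q=C_1/C_0$ be the cokernel complex. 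In degree $-1$ it is $(\bigoplus_{i}\mathbb{Z}_{D_i})/(\bigoplus_{i}\mathbb{Z}_{D_i})^{0}\cong\operatorname{im}[\bigoplus_{i}\mathbb{Z}_{D_i}\to\mathbf{NS}_{X/S}]$, in degree $0$ it is $\mathbf{Pic}_{X/S}/\mathbf{Pic}^{0}_{X/S}=\mathbf{NS}_{X/S}$, and its differential is the inclusion of that image; consequently $H^{-1}(Q)=0$ and $H^{0}(Q)=\mathbf{NS}_{X/S,D}$, so the natural map $Q\to\mathbf{NS}_{X/S,D}[0]$ is a quasi-isomorphism.

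Next I would apply $-\otimes^{\mathbf{L}}_{\mathbb{Z}}\mathbb{Z}/n\mathbb{Z}$ to the distinguished triangle $C_0\to C_1\to Q\to C_0[1]$ and read off the long exact sequence
\[
\begin{aligned}
H^{-2}(Q\otimes^{\mathbf{L}}_{\mathbb{Z}}\mathbb{Z}/n\mathbb{Z})&\to H^{-1}(C_0\otimes^{\mathbf{L}}_{\mathbb{Z}}\mathbb{Z}/n\mathbb{Z})\to H^{-1}(C_1\otimes^{\mathbf{L}}_{\mathbb{Z}}\mathbb{Z}/n\mathbb{Z})\\
&\xrightarrow{\ \rho\ } H^{-1}(Q\otimes^{\mathbf{L}}_{\mathbb{Z}}\mathbb{Z}/n\mathbb{Z})\xrightarrow{\ \partial\ } H^{0}(C_0\otimes^{\mathbf{L}}_{\mathbb{Z}}\mathbb{Z}/n\mathbb{Z}).
\end{aligned}
\]
Since $Q$ is quasi-isomorphic to a sheaf concentrated in degree $0$, the complex $Q\otimes^{\mathbf{L}}_{\mathbb{Z}}\mathbb{Z}/n\mathbb{Z}$ lives in degrees $-1$ and $0$; in particular $H^{-2}(Q\otimes^{\mathbf{L}}_{\mathbb{Z}}\mathbb{Z}/n\mathbb{Z})=0$, which already gives the injectivity on the left of the asserted sequence. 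Moreover $H^{-1}(Q\otimes^{\mathbf{L}}_{\mathbb{Z}}\mathbb{Z}/n\mathbb{Z})=\mathbf{NS}_{X/S,D}[n]$, and since the $n$-torsion subgroup scheme $\mathbf{NS}_{X/S,D}[n]$ is by construction contained in the torsion subgroup scheme $\mathbf{NS}^{\tau}_{X/S,D}$, it equals $\mathbf{NS}^{\tau}_{X/S,D}[n]=H^{-1}(\mathbf{NS}^{\tau}_{X/S,D}\otimes^{\mathbf{L}}_{\mathbb{Z}}\mathbb{Z}/n\mathbb{Z})$, the third term of the sequence.

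It then remains to prove that $\rho$ is surjective, equivalently that $\partial$ vanishes. Because $C_0$ is concentrated in degrees $-1$ and $0$, the universal coefficient sequence gives $H^{0}(C_0\otimes^{\mathbf{L}}_{\mathbb{Z}}\mathbb{Z}/n\mathbb{Z})=H^{0}(C_0)/n$, where $H^{0}(C_0)=\operatorname{coker}[(\bigoplus_{i}\mathbb{Z}_{D_i})^{0}\to\mathbf{Pic}^{0}_{X/S}]$. I would finish by showing this sheaf is $n$-divisible, hence $H^{0}(C_0)/n=0$ and $\partial=0$: multiplication by $n$ on $\mathbf{Pic}^{0}_{X/S}$ is an epimorphism of fppf sheaves (here one uses that $X\to S$ is proper and smooth, so that $\mathbf{Pic}^{0}_{X/S}$ is divisible), and an epimorphism stays an epimorphism on the quotient $H^{0}(C_0)$. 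Feeding $H^{0}(C_0\otimes^{\mathbf{L}}_{\mathbb{Z}}\mathbb{Z}/n\mathbb{Z})=0$ back into the long exact sequence produces exactly the claimed short exact sequence.

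The step I expect to be the real obstacle is this last one — the vanishing of $\partial$, i.e.\ the $n$-divisibility of $\mathbf{Pic}^{0}_{X/S}$ (hence of $H^{0}(C_0)$) in the fppf topology. Everything else is formal homological bookkeeping around the single short exact sequence of complexes, but this point is where the geometric hypotheses on $X/S$ genuinely enter, and it is where one must be careful — in particular the divisibility has to be checked at the closed points of $S$ and not merely at the generic point, since that is precisely where $\mathbf{Pic}^{0}$ of a fibre can misbehave.
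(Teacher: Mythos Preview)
Your approach is essentially the same as the paper's: both take the distinguished triangle $C_0\to C_1\to Q$, apply $-\otimes^{\mathbf L}_{\mathbb Z}\mathbb Z/n\mathbb Z$, and read off the short exact sequence from the long exact sequence after noting that $H^{-2}$ of the cone and $H^{0}(C_0\otimes^{\mathbf L}_{\mathbb Z}\mathbb Z/n\mathbb Z)$ vanish. You are in fact more careful than the paper on two points---you identify the cone as $\mathbf{NS}_{X/S,D}$ (rather than $\mathbf{NS}^{\tau}_{X/S,D}$) and then pass to $n$-torsion, and you supply the reason for $H^{0}(C_0\otimes^{\mathbf L}_{\mathbb Z}\mathbb Z/n\mathbb Z)=0$ via divisibility of $\mathbf{Pic}^{0}_{X/S}$, which the paper simply asserts.
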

\begin{proof}
We have an exact triangle of the form $$C_0 \otimes _ \mathbb{Z}^{\mathbf{L}} \mathbb{Z}/n\mathbb{Z} \to C_1 \otimes _ \mathbb{Z}^{\mathbf{L}} \mathbb{Z}/n\mathbb{Z} \to \mathbf{NS}^{\tau}_{X/S,D} \otimes  _ \mathbb{Z}^{\mathbf{L}} \mathbb{Z}/n\mathbb{Z} \to (C_0 \otimes _ \mathbb{Z}^{\mathbf{L}} \mathbb{Z}/n\mathbb{Z})[1]$$. Now, note that $H^{0}(C_0 \otimes _ \mathbb{Z}^{\mathbf{L}} \mathbb{Z}/n\mathbb{Z} ) =0$ and $H^{-2}(C_0, \mathbb{Z}/n\mathbb{Z}) =0$. Now this gives us an exact sequence of the form
$$0 \to H^{-1}(C_0 \otimes _ \mathbb{Z}^{\mathbf{L}} \mathbb{Z}/n\mathbb{Z}) \to  H^{-1}(C_1 \otimes _ \mathbb{Z}^{\mathbf{L}} \mathbb{Z}/n\mathbb{Z}) \to H^{-1}(\mathbf{NS}^{\tau}_{X/S,D} \otimes  _ \mathbb{Z}^{\mathbf{L}} \mathbb{Z}/n\mathbb{Z})\to 0$$

\end{proof}

\begin{thm}
We have an exact sequence of the form

$$0 \rightarrow{ \mathbf{NS}^{\tau}_{X/S,D}}^{\vee} \rightarrow (\pi^\text{log}_{\text{Nori}}(X,x))^{\text{ab}} \rightarrow \underset{n}{\varprojlim} \mathbf{Alb}_{X/S,D}[n] \rightarrow 0$$

where  $\mathbf{Alb}_{X/S,D} = \underline{\mathbf{Ext}}^{1}(C_{0}, \mathbb{G}_m)$.
\end{thm}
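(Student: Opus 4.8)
The plan is to dualize, level by level in $n$, the short exact sequence of the previous lemma, and then to pass to the inverse limit over $n$, feeding the outcome into Proposition~\ref{fundamental_pic}. By the lemma identifying $H^{-1}(C_1 \otimes_{\mathbb{Z}}^{\mathbf{L}} \mathbb{Z}/n\mathbb{Z})$ with $\mathbf{logPic}_{X/S}[n]$, and since $\mathbf{NS}^{\tau}_{X/S,D}$ is concentrated in degree $0$, so that $H^{-1}(\mathbf{NS}^{\tau}_{X/S,D}\otimes_{\mathbb{Z}}^{\mathbf{L}}\mathbb{Z}/n\mathbb{Z}) = \mathrm{Tor}_1^{\mathbb{Z}}(\mathbf{NS}^{\tau}_{X/S,D},\mathbb{Z}/n\mathbb{Z}) = \mathbf{NS}^{\tau}_{X/S,D}[n]$, the exact sequence of the last lemma becomes
\[
0 \to H^{-1}(C_0 \otimes_{\mathbb{Z}}^{\mathbf{L}} \mathbb{Z}/n\mathbb{Z}) \to \mathbf{logPic}_{X/S}[n] \to \mathbf{NS}^{\tau}_{X/S,D}[n] \to 0 ,
\]
a short exact sequence of finite flat commutative group schemes over $S$. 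As $\mathbf{NS}^{\tau}_{X/S,D}$ is a torsion group scheme there is an integer $N$ annihilating it, and then $\mathbf{NS}^{\tau}_{X/S,D}[n] = \mathbf{NS}^{\tau}_{X/S,D}$ for every multiple $n$ of $N$.

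Next I would apply Cartier duality $(-)^{\vee} = \underline{\mathbf{Hom}}_S(-,\mathbb{G}_m)$, an exact anti-equivalence on the category of finite flat commutative group schemes over $S$, to this sequence; for $n$ a multiple of $N$ it produces
\[
0 \to (\mathbf{NS}^{\tau}_{X/S,D})^{\vee} \to (\mathbf{logPic}_{X/S}[n])^{\vee} \to \bigl(H^{-1}(C_0 \otimes_{\mathbb{Z}}^{\mathbf{L}} \mathbb{Z}/n\mathbb{Z})\bigr)^{\vee} \to 0 ,
\]
whose right-hand term equals $\mathbf{Alb}_{X/S,D}[n]$ by Lemma~\ref{torsionses}.

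Finally I would take $\varprojlim_n$ along the natural transition maps, over the index set ordered by divisibility; restricting to multiples of $N$ is cofinal and hence harmless. By Proposition~\ref{fundamental_pic} the limit of the middle term is $(\pi^\text{log}_{\text{Nori}}(X,x))^{\text{ab}}$. The inverse system of left-hand terms is eventually constant, equal to $(\mathbf{NS}^{\tau}_{X/S,D})^{\vee}$, hence Mittag-Leffler with $\varprojlim^1 = 0$, so $\varprojlim_n$ remains exact on the displayed sequences and yields the asserted
\[
0 \to (\mathbf{NS}^{\tau}_{X/S,D})^{\vee} \to (\pi^\text{log}_{\text{Nori}}(X,x))^{\text{ab}} \to \varprojlim_n \mathbf{Alb}_{X/S,D}[n] \to 0 .
\]

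The step I expect to be the main obstacle is confirming that the three group schemes occurring at each level $n$ are genuinely finite and flat over $S$, and in particular that $\mathbf{NS}^{\tau}_{X/S,D}$ is quasi-finite flat over $S$ and annihilated by an integer: this is what legitimizes both the exactness of Cartier duality above and the stabilization $\mathbf{NS}^{\tau}_{X/S,D}[n] = \mathbf{NS}^{\tau}_{X/S,D}$. It is here that the Dedekind hypothesis on $S$, the properness and smoothness of $X/S$, and the behaviour of the $(-)^{\sharp}$ operation of Remark~\ref{sharp_definition} must be invoked, in parallel with the non-logarithmic argument of \cite{antei2011abelian}.
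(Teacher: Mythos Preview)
Your proposal is correct and follows essentially the same route as the paper's own proof: start from the short exact sequence of the preceding lemma, rewrite its terms via the identifications $H^{-1}(C_1\otimes_{\mathbb{Z}}^{\mathbf{L}}\mathbb{Z}/n\mathbb{Z})\cong\mathbf{logPic}_{X/S}[n]$ and $H^{-1}(\mathbf{NS}^{\tau}_{X/S,D}\otimes_{\mathbb{Z}}^{\mathbf{L}}\mathbb{Z}/n\mathbb{Z})=\mathbf{NS}^{\tau}_{X/S,D}[n]$, choose $n$ killing $\mathbf{NS}^{\tau}_{X/S,D}$, apply Cartier duality, invoke Lemma~\ref{torsionses} for the right-hand term, and pass to the inverse limit using Proposition~\ref{fundamental_pic}. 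Your version is in fact slightly more explicit than the paper's in that you spell out the cofinality and Mittag--Leffler arguments justifying exactness after $\varprojlim$, and you flag the finiteness/flatness hypotheses on which the exactness of Cartier duality rests; the paper leaves these points implicit.
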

\begin{proof}
From lemma \ref{torsionses} we have 

$$0 \to H^{-1}(C_0 \otimes _ \mathbb{Z}^{\mathbf{L}} \mathbb{Z}/n\mathbb{Z}) \to  H^{-1}(C_1 \otimes _ \mathbb{Z}^{\mathbf{L}} \mathbb{Z}/n\mathbb{Z}) \to H^{-1}(\mathbf{NS}^{\tau}_{X/S,D} \otimes  _ \mathbb{Z}^{\mathbf{L}} \mathbb{Z}/n\mathbb{Z})\to 0$$

Now, $\mathbf{NS}^{\tau}_{X/S,D}[n] = H^{-1}(\mathbf{NS}^{\tau}_{X/S,D} \otimes  _ \mathbb{Z}^{\mathbf{L}} \mathbb{Z}/n\mathbb{Z}) $
Let us choose $n$ such that $n$ kills $ \mathbf{NS}^{\tau}_{X/S,D}$, then we have $\mathbf{NS}^{\tau}_{X/S,D}[n] = \mathbf{NS}^{\tau}_{X/S,D}$.

So, we have 
$$0 \to  H^{-1}(C_0 \otimes _ \mathbb{Z}^{\mathbf{L}} \mathbb{Z}/n\mathbb{Z}) \to  H^{-1}(C_1 \otimes _ \mathbb{Z}^{\mathbf{L}} \mathbb{Z}/n\mathbb{Z}) \to \mathbf{NS}^{\tau}_{X/S,D} \to 0$$

Now taking the Cartier dual we get

$$0 \to (\mathbf{NS}^{\tau}_{X/S,D})^{\vee}   \to (H^{-1}(C_1 \otimes _ \mathbb{Z}^{\mathbf{L}} \mathbb{Z}/n\mathbb{Z}))^{\vee} \to (H^{-1}(C_0 \otimes _ \mathbb{Z}^{\mathbf{L}} \mathbb{Z}/n\mathbb{Z}))^{\vee} \to  0$$

Now, $(H^{-1}(C_0 \otimes _ \mathbb{Z}^{\mathbf{L}} \mathbb{Z}/n\mathbb{Z}) )^{\vee}  = \mathbf{Alb}_{X/S,D}[n]$.

Now taking the inverse limit we get 
$$0 \rightarrow {\mathbf{NS}^{\tau}_{X/S,D}}^{\vee} \rightarrow (\pi^\text{log}_{\text{Nori}}(X,x))^{\text{ab}} \rightarrow \underset{n}{\varprojlim} \mathbf{Alb}_{X/S,D}[n] \rightarrow 0$$

\end{proof}
\section{Special Cases}
\subsection{Case of Curves}
If $X$ is a smooth proper curve over $S$ and $D$ is a divisor satisfying the conditions of Theorem \ref{mainthm} then the torsion subgroup scheme of the generalized Neron-Severi group scheme  $\mathbf{NS}^{\tau}_{X/S,D}$ is $0$. Also, in this case the  $\mathbf{Alb}_{X/S,D}$ is the generalized Jacobian $\mathbf{J}_{X/S,D}$. So this gives us the following theorem

\begin{thm}

Let $X$ be a proper smooth curve over $S$ and $D$ is a divisor
satisfying the conditions of Theorem \ref{mainthm} then we have
$$ (\pi^\text{log}_{\text{Nori}}(X,x))^{\text{ab}} \cong \underset{n}{\varprojlim} \mathbf{J}_{X/S,D}[n] $$ Here $\mathbf{J}_{X/S,D}$ is the generalized Jacobian of $X$.
\end{thm}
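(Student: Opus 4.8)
The plan is to deduce this from Theorem~\ref{mainthm}, so the only work is to verify the two simplifications stated in the paragraph preceding the theorem: that $\mathbf{NS}^{\tau}_{X/S,D}=0$ and that $\mathbf{Alb}_{X/S,D}\cong\mathbf{J}_{X/S,D}$. Granting these, the exact sequence of Theorem~\ref{mainthm}, namely $0\to(\mathbf{NS}^{\tau}_{X/S,D})^{\vee}\to(\pi^{\mathrm{log}}_{\mathrm{Nori}}(X,x))^{\mathrm{ab}}\to\varprojlim_{n}\mathbf{Alb}_{X/S,D}[n]\to0$, has vanishing left-hand term and therefore collapses to $(\pi^{\mathrm{log}}_{\mathrm{Nori}}(X,x))^{\mathrm{ab}}\cong\varprojlim_{n}\mathbf{J}_{X/S,D}[n]$.

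For the vanishing of the torsion Néron--Severi term, recall that $X\to S$ is a proper smooth curve with geometrically connected fibres (indeed $f_{*}\mathcal{O}_{X}=\mathcal{O}_{S}$), so $\mathbf{Pic}^{0}_{X/S}$ is an abelian scheme and $\mathbf{NS}_{X/S}=\mathbf{Pic}_{X/S}/\mathbf{Pic}^{0}_{X/S}$ is the constant group scheme $\underline{\mathbb{Z}}$ via the degree map; in particular it is torsion free. Each irreducible component $D_{i}$ is a divisor in a relative curve, hence proper of relative dimension $0$ over $S$, and smooth over $S$ by hypothesis, so $D_{i}\to S$ is finite étale. After a suitable finite étale base change $S'\to S$ each $D_{i}\times_{S}S'$ splits into sections of $X\times_{S}S'\to S'$, and any such section is a degree-one divisor, hence maps to a generator of $\underline{\mathbb{Z}}$. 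Therefore the morphism of fppf sheaves $\bigoplus_{i}\mathbb{Z}_{D_{i}}\to\mathbf{NS}_{X/S}=\underline{\mathbb{Z}}$ is an epimorphism when $D\neq\emptyset$, so $\mathbf{NS}_{X/S,D}=0$; and if $D=\emptyset$ then $\mathbf{NS}_{X/S,D}=\underline{\mathbb{Z}}$ is torsion free. In either case $\mathbf{NS}^{\tau}_{X/S,D}=0$ and $(\mathbf{NS}^{\tau}_{X/S,D})^{\vee}=0$.

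For the identification of the generalized Albanese scheme, recall $\mathbf{Alb}_{X/S,D}=\underline{\mathbf{Ext}}^{1}(C_{0},\mathbb{G}_{m})$ with $C_{0}=[(\bigoplus_{i}\mathbb{Z}_{D_{i}})^{0}\to\mathbf{Pic}^{0}_{X/S}]$, and the exact sequence $0\to\mathbf{Hom}((\bigoplus_{i}\mathbb{Z}_{D_{i}})^{0},\mathbb{G}_{m})\to\mathbf{Alb}_{X/S,D}\to\mathbf{Alb}_{X/S}\to0$ from the introduction. Now $(\bigoplus_{i}\mathbb{Z}_{D_{i}})^{0}=\ker[\bigoplus_{i}\mathbb{Z}_{D_{i}}\xrightarrow{\deg}\underline{\mathbb{Z}}]$ is a locally constant sheaf of finite rank, so $\mathbf{Hom}((\bigoplus_{i}\mathbb{Z}_{D_{i}})^{0},\mathbb{G}_{m})$ is an $S$-torus; and $\mathbf{Alb}_{X/S}=\underline{\mathbf{Ext}}^{1}(\mathbf{Pic}^{0}_{X/S},\mathbb{G}_{m})$ is the dual abelian scheme of the Jacobian, hence canonically $\mathbf{J}_{X/S}$ by its principal polarization. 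So $\mathbf{Alb}_{X/S,D}$ is an extension of $\mathbf{J}_{X/S}$ by the torus associated with the points of $D$, and I would identify it with Serre's generalized Jacobian $\mathbf{J}_{X/S,D}$ of modulus $D$ in its relative form by matching extension classes: under autoduality of $\mathbf{J}_{X/S}$ the connecting map $(\bigoplus_{i}\mathbb{Z}_{D_{i}})^{0}\to\mathbf{Pic}^{0}_{X/S}$ of $C_{0}$ is dual to the restriction to $D$ of the Abel--Jacobi map of $X$.

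The main obstacle is this last step. The matching of extension classes is classical over a field; over the Dedekind base $S$ one must check it is compatible with the autoduality isomorphism $\underline{\mathbf{Ext}}^{1}(\mathbf{Pic}^{0}_{X/S},\mathbb{G}_{m})\cong\mathbf{Pic}^{0}_{X/S}$ and with the divisor/cycle maps, which I would do by spreading out from the generic fibre (using that $S$ is reduced and all the schemes in sight are $S$-flat and separated) or by citing the relative theory of generalized Jacobians directly. Everything else — torsion-freeness of $\underline{\mathbb{Z}}$, the vanishing of $\underline{\mathbf{Ext}}^{1}$ of a lattice into $\mathbb{G}_{m}$, and commuting the displayed isomorphisms with $\varprojlim_{n}$ — is routine.
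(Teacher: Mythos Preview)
Your approach is exactly the paper's: the paper simply asserts in the paragraph preceding the theorem that $\mathbf{NS}^{\tau}_{X/S,D}=0$ and that $\mathbf{Alb}_{X/S,D}$ is the generalized Jacobian, and then states the theorem as an immediate consequence of Theorem~\ref{mainthm}, with no further argument. You have supplied considerably more detail than the paper does for both of these claims; in particular, the identification of $\mathbf{Alb}_{X/S,D}$ with $\mathbf{J}_{X/S,D}$ over a Dedekind base, which you flag as the main obstacle, is left entirely unproved in the paper.
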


\subsection{ S is a $\mathbb{Q}$-scheme}
\label{char 0}
If $S$ is a Dedekind scheme over $\mathbb{Q}$ then we have a nice comparison theorem between the abelian etale fundamental group and abelian log Nori fundamental group scheme.

\begin{thm}
Let $X$ be a proper smooth scheme over $S$ which is a $\mathbb{Q}$-scheme  and $D$ is a divisor
satisfying the conditions of Theorem \ref{mainthm} then we have
$$ (\pi^\text{log}_{\text{Nori}}(X,x))^{\text{ab}} \cong  (\pi_{\text{et}}(U,x))^{\text{ab}} $$. Here $U = X-D$.
\end{thm}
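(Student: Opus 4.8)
The plan is to compare the two abelian quotients by identifying each with a limit of Cartier duals of $n$-torsion of the appropriate Picard-type object, and then invoking the classical comparison between $\ell$-adic étale cohomology and log flat cohomology when the base is of characteristic zero. By Proposition \ref{fundamental_pic} we already have $(\pi^{\text{log}}_{\text{Nori}}(X,x))^{\text{ab}} \cong \varprojlim_n (\mathbf{logPic}_{X/S}[n])^{\vee}$, so the task reduces to producing a canonical isomorphism $\varprojlim_n (\mathbf{logPic}_{X/S}[n])^{\vee} \cong (\pi_{\text{et}}(U,x))^{\text{ab}}$, where $U = X - D$. First I would recall that since $S$ is a $\mathbb{Q}$-scheme, every finite flat group scheme over $S$ is étale, so $H^1_{\text{log fl}}(X,G)$ agrees with $H^1_{\text{log \'et}}(X,G)$ for such $G$, and by the purity/comparison results for Kummer log étale cohomology (Kato--Nakayama, Illusie, and the results already cited from \cite{kato2019logarithmic}) the Kummer log étale site of $(X,D)$ computes the same cohomology as the étale site of the open complement $U$ with finite coefficients. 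Concretely, for $G$ a finite étale commutative group scheme, $H^1_{\text{log \'et}}(X,G) \cong H^1_{\text{\'et}}(U,G)$, compatibly with the base point $x$ (which lies in $U$ since it avoids $D$).

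Next I would translate this into a statement about fundamental groups. On the one hand, $\mathrm{Hom}((\pi^{\text{log}}_{\text{Nori}}(X,x))^{\text{ab}}, G) \cong H^1_\bullet(X,G)$ as recorded in the excerpt just before Proposition \ref{fundamental_pic}; on the other hand, for the (abelianized, pro-finite) étale fundamental group we have $\mathrm{Hom}((\pi_{\text{et}}(U,x))^{\text{ab}}, G) \cong H^1_{\text{\'et}}(U,G)/H^1_{\text{\'et}}(\mathrm{pt},G)$ where the base-point normalization matches the quotient by $H^1(S,G)$ used to define $H^1_\bullet$. I would check that the comparison isomorphism $H^1_{\text{log \'et}}(X,G) \cong H^1_{\text{\'et}}(U,G)$ is compatible with these normalizations — this is where one uses that $x$ factors through $U$ and through $S$ — so that it descends to an isomorphism $H^1_\bullet(X,G) \cong \mathrm{Hom}((\pi_{\text{et}}(U,x))^{\text{ab}},G)$, functorially in the finite group scheme $G$. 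Taking the limit over all such $G$ (equivalently, applying Yoneda to the pro-category of finite group schemes, which are all finite constant/étale here) then yields $(\pi^{\text{log}}_{\text{Nori}}(X,x))^{\text{ab}} \cong (\pi_{\text{et}}(U,x))^{\text{ab}}$.

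The main obstacle I expect is the Kummer log étale comparison step: one must justify that for a strict finite flat (hence étale) $G$ over the characteristic-zero base, $G$-torsors on the Kummer log flat site of $(X,D)$ are the same as $G$-torsors on the Kummer log étale site, and that the latter are classified by $H^1$ of the Kummer log étale site, which in turn — by the equivalence between finite Kummer log étale covers of $(X,D)$ and finite étale covers of $U$ tamely ramified along $D$ (automatically tame in characteristic zero) — coincides with $H^1_{\text{\'et}}(U,G)$. In the abelian setting with finite coefficients this equivalence is standard (it underlies the cited Theorem 4.1 of \cite{kato2019logarithmic} computing $R^1\epsilon_*\mathbb{G}_m$), but spelling out the base-point compatibility and the identification of $H^1(S,G)$ on both sides requires care. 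Once that is in hand, the rest is formal: pass to Cartier duals, take $\varprojlim_n$, and compare with Proposition \ref{fundamental_pic}.
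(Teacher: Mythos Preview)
Your approach is correct and shares the paper's overall strategy: both reduce the isomorphism of abelianized fundamental groups to a comparison of $\operatorname{Hom}$-sets into finite commutative group schemes $G$, which amounts to identifying $H^1_{\text{log fl}}(X,G)$ with $H^1_{\text{\'et}}(U,G)$. The execution of this key comparison differs, however. The paper first reduces \'etale-locally on $S$ to the case $G=\mathbb{Z}/n\mathbb{Z}$ and then compares the two five-term exact sequences arising from the Leray spectral sequences for $\epsilon\colon X_{\text{log fl}}\to X_{\text{fl}}$ and for $j\colon U\hookrightarrow X$, matching them term by term via $R^1\epsilon_*\mathbb{Z}/n\mathbb{Z}\cong R^1j_*\mathbb{Z}/n\mathbb{Z}$ (Theorem~10.1 of \cite{nakayama2017logarithmic}) together with the classical identifications $H^i_{\text{fl}}(X,\mathbb{Z}/n\mathbb{Z})\cong H^i_{\text{\'et}}(X,\mathbb{Z}/n\mathbb{Z})$ for $i=1,2$. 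Your route instead invokes directly the equivalence between finite Kummer log \'etale covers of $(X,D)$ and tamely ramified \'etale covers of $U$ (all covers being tame in characteristic zero), combined with the observation that finite flat group schemes over a $\mathbb{Q}$-scheme are \'etale, to obtain $H^1_{\text{log fl}}(X,G)\cong H^1_{\text{log \'et}}(X,G)\cong H^1_{\text{\'et}}(U,G)$ in one step. Your argument is more conceptual and sidesteps the Leray bookkeeping, at the cost of leaning on the Kummer-log-\'etale/tame-cover equivalence as a black box; the paper's argument is more explicit and keeps the comparison entirely at the level of sheaves on $X$. Your attention to the base-point normalization (the quotient by $H^1(S,G)$) is also more careful than the paper's presentation, which suppresses this point.
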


\begin{proof}
We will show that the canonical map from $(\pi_{\text{et}}(U,x))^{\text{ab}}$ to $(\pi^\text{log}_{\text{Nori}}(X,x))^{\text{ab}}$ is an isomorphism. We know that $\operatorname{Hom}_{S}(\pi_{\text{et}}(U,x))^{\text{ab}},G) = H^1_{\text{et}}(U,G)$ and $\operatorname{Hom}_{S}(\pi^\text{log}_{\text{Nori}}(X,x)),G) = H^1_{\text{logfl}}(X,G)$ where $G$ is a finite abelian group scheme over $S$. If we work etale locally over $S$, then $G$ is isomorphic to the direct sum of $\mathbb{Z}/n\mathbb{Z}$.
Therefore it is enough to prove that the map from $H^1_{\text{logfl}}(X,\mathbb{Z}/n\mathbb{Z})$ to $H^1_{\text{et}}(U,\mathbb{Z}/n\mathbb{Z})$ is an isomorphism.
Let $\epsilon$ the canonical map of sites from $X_{\text{logfl}}$ to $X_{\text{fl}}$ and let $j : U \to X$. Then we have the following exact sequences

$$0 \to H^1_{\text{fl}}(X,\mathbb{Z}/n\mathbb{Z}) \to H^1_{\text{logfl}}(X,\mathbb{Z}/n\mathbb{Z}) \to H^0_{\text{fl}}(X_{\text{fl}}, R^1\epsilon_{*}\mathbb{Z}/n\mathbb{Z}) \to H^2(X_{\text{fl}},\mathbb{Z}/n\mathbb{Z})   $$

$$0 \to H^1_{\text{et}}(X,\mathbb{Z}/n\mathbb{Z}) \to H^1_{\text{et}}(U,\mathbb{Z}/n\mathbb{Z}) \to H^0_{\text{et}}(X_{\text{et}}, R^1j_{*}\mathbb{Z}/n\mathbb{Z}) \to H^2(X_{\text{et}},\mathbb{Z}/n\mathbb{Z})   $$

Now we have  $R^1j_{*}\mathbb{Z}/n\mathbb{Z}$ is isomorphic to  $R^1\epsilon_{*}\mathbb{Z}/n\mathbb{Z})$ which follows from Theorem 10.1 in \cite{nakayama2017logarithmic},  $H^1_{\text{et}}(X,\mathbb{Z}/n\mathbb{Z}) \cong  H^1_{\text{fl}}(X,\mathbb{Z}/n\mathbb{Z})$ and $H^2_{\text{fl}}(X,\mathbb{Z}/n\mathbb{Z}) \cong H^2_{\text{et}}(X,\mathbb{Z}/n\mathbb{Z})$. This gives us the map from  $H^1_{\text{logfl}}(X,\mathbb{Z}/n\mathbb{Z})$ to $H^1_{\text{et}}(U,\mathbb{Z}/n\mathbb{Z})$ is an isomorphism.
\end{proof}

\section{Abelian Fundamental Group Scheme of the fibers}
Let $S$ be a connected Dedekind scheme and $f:X \to S$ be a proper smooth connected $S$-scheme provided with a $S$-point $x: S \to X$. Then the size of the abelian Nori fundamental group scheme does not vary with fibers. More precisely, let $s \in S$ and consider the fiber of $f_s : X_s \to \spec(\kappa(s))$ and we have a section $x_s \in X_{s}(\spec(\kappa(s))$, then $(\pi_{\text{Nori}}(X,x))^{\text{ab}}_{s} \cong (\pi_{\text{Nori}}(X_s,x_s))^{\text{ab}}$. One can show a similar property holds for the log fundamental group scheme as well. Let $D$ be a divisor with no multiplicity of $X$ such that the irreducible components of $D$ as well as their intersections are smooth over $S$. Now we endow $X$ with the log structure associated with $D$. Then the structure morphism $f$ from $X$ to $S$ is log-smooth. We also assume our section $x$ does not intersect with $D$. Then the following property holds

\begin{lem}
\label{fibrefg}
$(\pi^{\text{log}}_{\text{Nori}}(X,x))^{\text{ab}}_{s} \cong (\pi^{\text{log}}_{\text{Nori}}(X_s,x_s))^{\text{ab}}.$
\end{lem}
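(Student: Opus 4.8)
The plan is to reduce the statement, via the description of the abelian log Nori fundamental group scheme by the torsion of the log Picard scheme, to a base-change property for the pieces appearing in Theorem \ref{mainthm}. By Proposition \ref{fundamental_pic} we have $(\pi^{\text{log}}_{\text{Nori}}(X,x))^{\text{ab}}\cong\varprojlim_{n}(\mathbf{logPic}_{X/S}[n])^{\vee}$. The fibre $X_s$, with the log structure associated with $D_s=D\times_S\spec\kappa(s)$, again satisfies all the running hypotheses: it is proper and smooth over the field $\kappa(s)$, and the irreducible components of $D_s$ and their intersections are smooth over $\kappa(s)$. Hence Proposition \ref{fundamental_pic} over $\spec\kappa(s)$ gives $(\pi^{\text{log}}_{\text{Nori}}(X_s,x_s))^{\text{ab}}\cong\varprojlim_{n}(\mathbf{logPic}_{X_s/\kappa(s)}[n])^{\vee}$. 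Pulling back log torsors along $X_s\hookrightarrow X$ yields a functor $\mathcal{T}^{\text{log}}(X,x)\to\mathcal{T}^{\text{log}}(X_s,x_s)$ and hence a morphism $(\pi^{\text{log}}_{\text{Nori}}(X_s,x_s))^{\text{ab}}\to(\pi^{\text{log}}_{\text{Nori}}(X,x))^{\text{ab}}_s$ of profinite group schemes over $\kappa(s)$ (using that base change to $\spec\kappa(s)$ commutes with cofiltered limits of affine schemes). Under the two identifications above this morphism is Cartier-dual to the base-change maps $(\mathbf{logPic}_{X/S}[n])_s\to\mathbf{logPic}_{X_s/\kappa(s)}[n]$; since Cartier duality commutes with arbitrary base change and with the cofiltered limit over $n$, it suffices to show that these maps are isomorphisms for every $n$.

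First I would use the identification $\mathbf{logPic}_{X/S}[n]\cong H^{-1}(C_1\otimes^{\mathbf{L}}_{\mathbb{Z}}\mathbb{Z}/n\mathbb{Z})$, with $C_1=[\bigoplus_{1\leq i\leq m}\mathbb{Z}_{D_i}\to\mathbf{Pic}_{X/S}]$. The formation of $C_1$ commutes with the base change to $s$: the sheaves $\mathbb{Z}_{D_i}=f_*\mathbb{Z}$ do, because the $D_i$ and all their intersections are smooth — in particular flat and of finite presentation — over $S$, so $(\mathbb{Z}_{D_i})_s=\mathbb{Z}_{D_{i,s}}$; and $\mathbf{Pic}_{X/S}$ does, because $f$ is proper and smooth with a section. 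Computing $H^{-1}$ of the mod-$n$ reduction on the explicit double complex $[\bigoplus_i\mathbb{Z}_{D_i}\xrightarrow{n}\bigoplus_i\mathbb{Z}_{D_i}]$ over $[\mathbf{Pic}_{X/S}\xrightarrow{n}\mathbf{Pic}_{X/S}]$ — the analogue, with $\mathbb{Z}_{D_i}$ and $\mathbf{Pic}_{X/S}$ in place of $\mathbb{Q}_{D_i}$ and $\mathbf{logPic}_{X/S}$, of the double complex used above — one reduces the claim to the statement that $\mathbf{Pic}^0_{X/S}[n]$ and the torsion subgroup scheme $\mathbf{NS}^{\tau}_{X/S}$ are finite flat over $S$ with formation commuting with base change; this is exactly the input already used in the classical case \cite{antei2011abelian}.

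Equivalently, one may run the argument through the exact sequence of Theorem \ref{mainthm} and the displayed exact sequence for $\mathbf{Alb}_{X/S,D}$: the $n$-torsion of $\mathbf{Hom}((\bigoplus_i\mathbb{Z}_{D_i})^{0},\mathbb{G}_m)$ is a twisted form of $\mu_n^{r}$, and the $n$-torsion of $\mathbf{Alb}_{X/S}$ is finite flat over $S$ with formation commuting with base change as in the classical case, so a diagram chase with the five lemma reduces everything to the same statement for $(\mathbf{NS}^{\tau}_{X/S,D})^{\vee}$ and for the kernel $(\bigoplus_i\mathbb{Z}_{D_i})^{0}$.

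I expect this last point to be the main obstacle: the generalized Néron--Severi group may jump in special fibres, so one has to show that its torsion subgroup scheme over the Dedekind base $S$ is finite flat with formation commuting with base change — this is where regularity of $S$ in dimension $\leq 1$ is used, exactly as in \cite{antei2011abelian} — and likewise that $(\bigoplus_i\mathbb{Z}_{D_i})^{0}$ commutes with base change, which in turn follows from the $\mathbf{NS}^{\tau}$ statement together with the flatness of the $\mathbb{Z}_{D_i}$. Granting this, $(\mathbf{logPic}_{X/S}[n])_s\cong\mathbf{logPic}_{X_s/\kappa(s)}[n]$ for every $n$, and the lemma follows.
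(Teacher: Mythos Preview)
Your approach is essentially the paper's: both reduce via Proposition \ref{fundamental_pic} to the statement that the formation of $\mathbf{logPic}_{X/S}[n]$ commutes with base change to the fiber over $s$, and then invoke the same proposition over $\spec\kappa(s)$. The paper's proof simply asserts this base-change step without argument, whereas you supply a justification through the identification $\mathbf{logPic}_{X/S}[n]\cong H^{-1}(C_1\otimes^{\mathbf{L}}\mathbb{Z}/n\mathbb{Z})$ and the classical base-change properties of $\mathbf{Pic}_{X/S}$ and $\mathbf{NS}^{\tau}$ from \cite{antei2011abelian}; this is additional care rather than a different route.
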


\begin{proof}
Using Proposition \ref{fundamental_pic}, we have

$$(\pi^\text{log}_{\text{Nori}}(X,x))^{\text{ab}} \cong \underset{n \in \mathbb{N}}{\varprojlim} (\mathbf{logPic}_{X/S}[n])^{\vee}    $$. This should give us

$$(\pi^{\text{log}}_{\text{Nori}}(X,x))^{\text{ab}}_{s} = \underset{n \in \mathbb{N}}{\varprojlim} (\mathbf{logPic}_{ X_s/\spec(\kappa(s))}[n])^{\vee} = (\pi^{\text{log}}_{\text{Nori}}(X_s,x_s))^{\text{ab}}.$$

\end{proof}

The abelian log Nori Fundamental group scheme is especially well behaved when one moves from characteristic $0$ to characteristic $p>0$. Let $s,t \in S$ such that the characteristic of $\kappa(s)$ is $p>0$ and  the the characteristic of $\kappa(t)$ is $0$. Let $U_s = X_s - D_s$ and $U_t = X_t - D_t$ and the log structures on $X_s$ and $X_t$ are due to $D_s$ and $D_t$. Then  $(\pi^{\text{log}}_{\text{Nori}}(X_s,x_s))^{\text{ab}}$ and  $(\pi^{\text{log}}_{\text{Nori}}(X_t,x_t))^{\text{ab}}$ are fibers of the same finite flat group scheme by $\ref{fibrefg}$ where as the size of $(\pi^{et}(U_s,x_s))^{\text{ab}}$ is different from $(\pi^{et}(U_t,x_t))^{\text{ab}}$. Using \ref{char 0} we get that $(\pi^{et}(U_t,x_t))^{\text{ab}} \cong (\pi^{\text{log}}_{\text{Nori}}(X_t,x_t))^{\text{ab}}$. For instance, for the curve $X=\mathbb{P}_S-\{0,\infty\}$ in characteristic $0$, the $(\pi^{et}(X_t,x_t))^{\text{ab}}$ is $\underset{n}{\varprojlim} \mu_n$ and the abelian log Nori fundamental group scheme of $\mathbb{P}_S$ with $D = \{0,\infty\ \}$ is also $\underset{n}{\varprojlim} \mu_n$. In characteristic $p>0$ on the other hand, only the prime-to-$p$ part of $(\pi^{et}(X_s,x_s))^{\text{ab}}$ is $\underset{n}{\varprojlim} \mu_n$ and the total fundamental group is very large where as the abelian log Nori fundamental group scheme of $\mathbb{P}_S$ with $D = \{0,\infty\ \}$ is still $\underset{n}{\varprojlim} \mu_n$.
 
\bibliography{log_scheme}
\bibliographystyle{amsplain}

\end{document}